\newcommand{\Sp}{\ensuremath{\textup{Sp}}}
\newcommand{\proE}{\ensuremath{\textup{pro-}{\cat E}}}
\newcommand{\op}{{\ensuremath{\textup{op}}}}
\DeclareMathOperator{\Hor}{\ensuremath{\textup{Hor}}}
\newcommand {\cofib} {\ensuremath{\hookrightarrow}}
\newcommand {\fibr} {\ensuremath{\twoheadrightarrow}}
\newcommand {\trivcofib} {\ensuremath{\tilde\hookrightarrow}}
\newcommand {\trivfibr} {\ensuremath{\tilde\twoheadrightarrow}}
\newcommand {\we} {\ensuremath{\tilde\rightarrow}}
\newcommand {\Feq} {\ensuremath{\overset {\cal F} \simeq}}
\DeclareMathOperator{\fib}{\textup{fib}}
\DeclareMathOperator{\hocolim}{\textup{hocolim}}
\DeclareMathOperator{\holim}{\textup{holim}}
\DeclareMathOperator{\hofib}{\textup{hofib}}
\newcommand{\cal}[1]{\ensuremath{\mathcal #1}} 
\newtheorem {theorem1}{Theorem}[section]
\newtheorem {theorem}[theorem1]{Theorem}
\newtheorem*{theorem_nonum}{Theorem}
\newtheorem {corollary}[theorem1]{Corollary}
\newtheorem {proposition}[theorem1]{Proposition}
\newtheorem {lemma}[theorem1]{Lemma}
\theoremstyle{definition}
\newtheorem {definition}[theorem1]{Definition}
\theoremstyle{remark}
\newtheorem {remark}[theorem1]{Remark}
\newcommand{\cat}[1]{\ensuremath{\EuScript #1}}
\newcommand{\colim}{\ensuremath{\mathop{\textup{colim}}}}
\newcommand{\Id}{\ensuremath{\textup{Id}}}
\newcommand{\obj}[1]{\ensuremath{\textup{obj}(#1)}}
\newcommand{\mor}[1]{\ensuremath{\textup{mor}(#1)}}
\renewcommand{\hom}{\ensuremath{{\rm hom}}}
\newcounter{zahl}%
    {\end{list}}%
\title[A classification of small linear functors]{A classification of small linear functors}
\author{Boris Chorny}
\date{\today} 
\address{\newline B. Chorny\newline
Department of Mathematics\newline
University of Haifa at Oranim\newline
Tivon, Israel
}
\email{chorny@math.haifa.ac.il}
\begin{document}

\begin{abstract}
We extend Goodwillie's classification of finitary linear functors to arbitrary small functors. That is we show that every small linear simplicial functor from spectra to pointed simplicial sets is weakly equivalent to a filtered colimit of representable functors represented in cofibrant spectra. Moreover, we present this classification as a Quillen equivalence of the category of small functors from spectra to simplicial sets equipped with the linear model structure and the opposite of the pro-category of spectra with the strict model structure.
\end{abstract}

\maketitle

\section{Introduction}
Calculus of homotopy functors is a method pioneered by T.~Goodwillie, \cite{Goo:calc1,Goo:calc2, Goo:calc3}, to decompose any homotopy functor $F\colon \cat C\to \cat D$ between two sufficiently nice model categories into a `Taylor tower' $F\to \ldots P_nF\to P_{n-1}F \to \ldots P_0F$, where $F\to P_nF$ is the universal `polynomial' approximation of $F$. For nice $F$ and sufficiently highly connected $X\in \cat C$ the tower `converges' in the sense that there is a weak  equivalence $F(X)\we \holim_n P_nF(X)$. 

The advantage of this representation of a functor $F$ is that the homotopy fibers $D_nF = \hofib (P_nF\to P_{n-1}F)$, called the homogeneous layers of the tower, have a relatively simple description. For a homotopy functor $F\colon \textup{Top}_\ast \to \textup{Top}_\ast$ of based spaces, it suffices to give one spectrum $\partial_nF$ with an action of $\Sigma_n$, called the `$n$th derivative' of $F$ at $\ast$, to recover the value of $D_nF$ in all finite CW-complexes $X$: 
\begin{equation}\label{Goodwillie-classification}
D_nF(X) \simeq \Omega^\infty(\partial_n F \wedge_{h\Sigma_n} X^{\wedge n}).
\end{equation} 
This feature allows for computational applications of Goodwillie's calculus, \cite{Arone-Mahowald, Dundas-Goodwillie-McCarthy}.

Since the machinery of homotopy calculus has proven to be an efficient computational tool, many authors keep developing the machine itself, to apply it in new setup. The question of generalization of Goodwillie's calculus to arbitrary model categories satisfying some reasonable assumptions was addressed recently by L.~A.~Pereira, \cite{Pereira}, G.~Biedermann and O.~R\"ondigs, \cite{Biedermann-Roendigs}, D.~Barnes and R.~Eldred, \cite{Barnes-Eldred}. The question of recovering the original functor $F$ from the symmetric sequence of its derivatives with some additional structure is addressed in the recent work of G.~Arone and M.~Ching, \cite{Arone-Ching-classification, Arone-Ching-crosseffects}.

In this paper we address a question requiring an extension of the calculus technique: what are the values of the homogeneous layers of the Taylor tower in infinite spaces $X$?

Goodwillie has resolved this problem in \cite{Goo:calc3} by assuming additionally that the functor $F$ is \emph{finitary}, i.e., $F$ commutes with filtered homotopy colimits, up to homotopy, so that its values in infinite spaces are determined, up to a weak equivalence, by the values of $F$ in finite spaces.

Under this assumption, the formula (\ref{Goodwillie-classification}) provides a complete classification of homogeneous functors. 1-homogeneous functors are also called \emph{linear}. More explicitly, a functor $F\colon \cat C \to \cat D$ of two model categories is called linear if it takes homotopy pushout squares to homotopy pullback squares and $F$ is reduced,i.e., $F(\ast)\simeq\ast$. 

Let us denote by $\cal S$ the category of pointed simplicial sets, also called spaces; $\Sp$ is the (Bousfield-Friedlander) category of spectra considered as a simplicial model category.

Goodwillie's classification tells us that if $F\colon \cal S\to \cal S$ is a finitary linear functor, then $F(X)\simeq \Omega^\infty(\partial_1F \wedge X)$ for all  $X\in \cal S$. It may be carried over to a classification of finitary linear functors $G\colon \Sp\to \cal S$ as $G(Y)\simeq \Omega^\infty(\partial_1F \wedge Y)$ for all cofibrant $Y\in \Sp$. We suggest to extend this  classification further, so that it would  apply to all small linear functors, i.e., commuting with $\lambda$-filtered colimits for some cardinal $\lambda$. The category of small simplicial functors from spectra to spaces is denoted by $\cal S^\Sp$. For any (large) simplicial category $\cat E$, the category of small functors, $\cal S^{\cat E}$ is a locally small category; for basic model categorical techniques in such categories see \cite{Chorny-Dwyer},\cite{Chorny-Rosicky}; for category-theoretical foundations see \cite{Day-Lack, Chorny-Rosicky-class-accessible}. 

The basic building blocks of our classification are the representable functors $R^E(Y)=\hom(E,Y)$, for some cofibrant $Y\in \Sp$. To turn $R^E$ into a linear functor the reader may precompose it with a fibrant replacement in spectra. We have found a more elegant way to make this functor homotopy meaningful: consider the fibrant-projective model structure on $\cal S^\Sp$ introduced in \cite{Duality} (fibrations and weak equivalences are the natural transformations inducing fibrations and weak equivalences respectively, between the values of the functors in fibrant objects), so that every representable functor is (fibrant-projectively) weakly equivalent to a proper linear functor. Notice that the representable functors are not finitary, unless represented in compact spectra. If we consider a filtered colimit of representable functors, then we obtain a linear functor again, since filtered colimits commute with homotopy pullbacks in the category of spaces. The main result of this paper is that these are all the possibilities, every linear functor is (fibrant-projectively) weakly equivalent to a filtered colimit of representable functors. We formulate this result for a general combinatorial proper simplicial stable model category \cat E with a view towards a general classification of $n$-homogeneous small simplicial functors.

\begin{theorem_nonum}[{\bf\ref{classification}}]
Let $F\in \cal S^{\cat E}$ be a linear functor. Then there exists a
filtered diagram $J$ and a functor $G=\colim_{j\in J}R^{X_j}$ with
cofibrant $X_j\in \cat E$ for all $j\in J$ and a weak equivalence
$f\colon \tilde F\to G$ for some cellular approximation $\tilde
F\we F$ in the fibrant-projective model structure.
\end{theorem_nonum}

This classification can be viewed as a Quillen-equivalence of the linear model structure on the category of small functors and the opposite of the category \proE, provided that the linear model structure on $\cal S^{\cat E}$ exists. The linear model structure is a (left Bousfield) localization of the fibrant-projective model structure, and we managed to construct it under an additional assumption that all objects in \cat E are cofibrant.

A similar result was obtained in \cite{Chorny-ClassHomFun} for small spectral functors from spectra to spectra, though we claimed the classification of all small spectral homotopy functors in that paper. The reason is that every small spectral homotopy functor is linear, hence there is no interesting calculus theory for spectral functors. 

The paper is organized as follows. 

In Section~\ref{preliminaries} we recall basic facts about pro-categories and construct a left adjoint to the pro-representable functor, which allows us to view the opposite of the pro-category as a reflective subcategory of the category of small simplicial presheaves. If \cat E is a proper combinatorial simplicial model category, then we show that this is a Quillen adjunction, provided that the category of small functors is equipped with the fibrant-projective model structure and $\proE$ is equipped with the strict model structure.

In Section~\ref{linear-model} we construct a (non-functorial) localization $Q$ in $\cal S^\cat E$ such that the local objects are precisely the fibrant linear functors. Note that the construction of $Q=P_1$ is performed by a completely model-categorical technique and does not rely on Goodwillie's ideas, using the fat small-object argument instead \cite{fat}. Similar construction is possible for an arbitrary polynomial approximation $P_n$. If all objects in \cat E are cofibrant, then we are able to construct the $Q$-localization of $\cal S^\cat E$ in order to obtain the linear model structure. We do use a generalization of Goodwillie's technique due to L.A.~Pereira, \cite{Pereira} in this argument.

In Section~\ref{section-classification} we prove our main classification result. And finally in Section~\ref{section-Quillen-equivalence} we present our classification as a Quillen equivalence, provided that the linear model structure was established on the category of small functors. The Appendix is devoted to the proof of a technical lemma establishing the conditions sufficient for a reflection, which is also a Quillen adjunction to be a Quillen equivalence. It seems useful to separate this lemma for future reference, since similar considerations were used in recent papers, \cite{Duality, Chorny-ClassHomFun, Chorny-BrownRep}.

\section{Preliminaries on pro-categories}\label{preliminaries}
In this section and all over the paper we use the adjective `simplicial' to denote enrichment over the \emph{pointed} simplicial sets, $\cal S$, though the results of this section apply to the unpointed simplicial sets as well.

Let $\cat E$ be a complete and cocomplete simplicial category. That means the hom-sets of \cat E are pointed simplicial sets, and in addition \cat E is tensored and cotensored over the pointed  simplicial sets. The goal of this preliminary section is to show that the opposite of the category pro-\cat E is equivalent to a reflective subcategory of small functors from \cat E to spaces. If we work with the fibrant-projective model structure on the category of small functors \cite[3.6]{Duality}, this adjunction is a Quillen pair and it carries over to the level of the homotopy categories.

We start from a simpler adjunction:
\begin{equation}\label{simpler-adjunction}
\xymatrix{
Z\colon\cal S^{\cat E}
\ar@/^10pt/@{..>}[r] & \cat E^{\op}:\!Y,
				\ar@{_(->}[l]
}
\end{equation}
where $Y(E) = R^E(-)= \hom_{\cat E}(E,-)$ and $Z(F) = \hom(F, \Id_{\cat E}) = \int_{E\in \cat E}\hom_{\cat E}(F(E),E)$. The left adjoint $Z(F)$ exists since all $F\in \cal S^{\cat E}$ are small functors. The adjointness is readily verified:
\begin{multline*}
\hom_{{\cat E}^{\op}}(Z(F), E) = \hom_{{\cat E}^{\op}}(\hom(F, \Id_{{\cat E}}), E)=\hom_{{\cat E}}(E, \hom(F, \Id_{{\cat E}}))=\\
\hom_{\cal S^{{\cat E}}}(E\otimes F, \Id_{{\cat E}}) = \hom_{\cal S^{{\cat E}}}(F, \Id_{{\cat E}}^E) = \hom_{\cal S^{{\cat E}}}(F, R^E) = \hom_{\cal S^{{\cat E}}}(F, Y(E)).
\end{multline*}

Let us recall the definition of the pro-category for a given category $\cat E$. The objects of the category pro-\cat E are cofiltered diagrams of objects of \cat E, i.e., for every filtering $I$, any functor $X\colon I^{\op}\to \cat E$ is a pro-object. We denote this pro-object as $X_{\bullet} = \{X_i\}_{i\in I}$.

A morphism $f\colon \{X_i\}_{i\in I}\to \{Y_j\}_{j\in J}$ between two pro-objects is given by a function $\varphi\colon \obj J\to \obj I$ and a morphism in \cat E $f\colon X_{\varphi(j)}\to Y_j$ for all $j\in J$ such that for all $j_1,j_2\in J$ there exists $i\in I$ with maps $\iota_1\colon \varphi(j_1) \to i$ and $\iota_2\colon \varphi(j_2)\to i$ the diagram
\[
\xymatrix{
X_i
\ar[r]^{X(\iota_{1})}
\ar[d]_{X(\iota_{2})} & X_{\varphi(j_1)}
						\ar[r]^{f_{j_1}} & Y_{j_1}
										\ar[d]^{}\\
X_{\varphi(j_2)}
\ar[rr]_{f_{j_2}} & &  Y_{j_2}
}
\]
commutes. Formally,
\[
\hom_{\proE}(\{X_i\}, \{Y_j\}) = \lim_{j\in J}\colim_{i\in I}\hom_{\cat E}(X_i, Y_j).
\]
The category of pro-objects in \cat E is enriched over the category of simplicial sets with the simplicial $\hom_{\proE} (-, -)$ calculated by the above rule, while taking $\hom_{\cat E}(-, -)$ to be the simplicial $\hom$-functor in the category \cat E.

The category of small functors $\cal S^{\cat E}$ consists of small functors as objects and natural transformations as morphisms. We recall that a functor $F\colon \cat E\to \cal S$ is small if it is a left Kan extension of its restriction to some small subcategory; equivalently, small functors are small weighted colimits of representable functors.

The restriction of the Yoneda embedding $Y\colon (\proE)^{\op} \to \cal S^{\proE}$ to the subcategory $\cat E\overset c \cofib \proE$ is a functor $P\colon (\proE)^{\op} \to \cal S^{\cat E}$ defined as a composition $P=c^\ast Y$ and sending every pro-object $X_{\bullet}$ into the pro-representable functor $\hom_{\proE}(X_{\bullet}, - )\colon \cat E \to \cal S$. By the definition of morphisms in the category \proE, the pro-representable functor $\hom_{\proE}(\{X_i\}, - )=\colim_{i\in I}\hom_{\cat E}(X_i,-)$ is a filtered colimit of representable functors $R^{X_{i}}$ over $I$. In particular, every pro-representable functor is small.

Our goal in this section is to show that the functor $P$ has a left adjoint. 

\begin{proposition}\label{adjunction}
The functor $P\colon (\proE)^{\op} \to \cal S^{\cat E}$ has a left adjoint $O\colon \cal S^{\cat E}\to (\proE)^{\op}$.
\end{proposition}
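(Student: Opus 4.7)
The plan is to build $O$ by dualizing the presentation of a small functor as a weighted colimit of representables. First, for each $F\in\cal S^{\cat E}$, I would fix a small simplicial category $\cat C$, a simplicial diagram $R\colon\cat C\to\cat E$, and a weight $W\colon\cat C\to\cal S$ such that $F\cong \int^{c\in\cat C} W(c)\otimes R^{R(c)}$; such data exist because by definition a small functor is a left Kan extension of its restriction to a small subcategory, and the coend formula for left Kan extensions provides exactly this shape. On representables the desired adjunction already forces $O(R^E)=E$, regarded as a constant pro-object, since $\hom_{\cal S^{\cat E}}(R^E,P(X_\bullet))=P(X_\bullet)(E)=\hom_{\proE}(X_\bullet,E)$ by the Yoneda lemma together with the definition of $P$.

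Guided by this, I would set
\[
O(F):=\int_{c\in\cat C} R(c)^{W(c)},
\]
the weighted limit in $\proE$ dual to the chosen presentation of $F$. To justify that this lives in $\proE$, I would check that $\proE$ inherits enough structure from $\cat E$: cotensors of a pro-object $\{X_i\}_{i\in I}$ by a pointed simplicial set $K$ are formed levelwise as $\{X_i^K\}_{i\in I}$, and small limits of pro-objects are assembled from the cofiltered limits that come for free with the construction of $\proE$ together with the finite limits inherited objectwise from $\cat E$.

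Next I would verify the adjunction by the Yoneda-style computation
\begin{align*}
\hom_{\cal S^{\cat E}}(F,P(X_\bullet))
&\cong \int_{c\in\cat C}\hom_{\cal S}\bigl(W(c),\hom_{\cal S^{\cat E}}(R^{R(c)},P(X_\bullet))\bigr)\\
&\cong \int_{c\in\cat C}\hom_{\cal S}\bigl(W(c),\hom_{\proE}(X_\bullet,R(c))\bigr)\\
&\cong \hom_{\proE}\Bigl(X_\bullet,\int_{c\in\cat C} R(c)^{W(c)}\Bigr)\;=\;\hom_{\proE}(X_\bullet,O(F)),
\end{align*}
using the universal property of the coend, the identity $P(X_\bullet)(R(c))=\hom_{\proE}(X_\bullet,R(c))$, and the universal properties of cotensors and ends in $\proE$, respectively. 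This isomorphism is natural in $X_\bullet$, and its naturality in $F$ together with the Yoneda lemma guarantees that $O(F)$ is independent of the chosen presentation and that the assignment extends uniquely to a functor $O\colon\cal S^{\cat E}\to(\proE)^{\op}$ left adjoint to $P$.

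The main obstacle will be checking rigorously that $\proE$ really does carry the cotensors and ends appearing in the construction. This is essentially bookkeeping for pro-categories over a complete, tensored, and cotensored base, but it requires care: the simplicial enrichment of $\proE$ is defined through a limit-of-colimit expression whose interaction with arbitrary simplicial-set exponentials is not automatic. Once these limits are seen to exist in $\proE$, the remaining steps are routine end/coend manipulations.
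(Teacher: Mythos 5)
Your route is genuinely different from the paper's. The paper writes $F$ as a filtered colimit of \emph{finitely presentable} functors $C_i$ (using class-finite-presentability of $\cal S^{\cat E}$), applies the elementary left adjoint $Z\colon\cal S^{\cat E}\to\cat E^{\op}$ of the restricted Yoneda embedding termwise, sets $OF=\{ZC_i\}$, and invokes Freyd's adjoint functor theorem to dispose of the dependence on the presentation. That choice is not cosmetic: finite presentability of the $C_i$ is exactly what lets $\hom(C_i,-)$ commute with the filtered colimit $\colim_j R^{X_j}=PX_\bullet$, after which no limits in $\proE$ are ever needed. Your argument, by contrast, stands or falls with the existence of cotensors and ends in $\proE$, and this is where there is a genuine error: the cotensor of $X_\bullet=\{X_i\}$ by a pointed simplicial set $K$ is \emph{not} computed levelwise unless $K$ is finite. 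Indeed,
\[
\hom_{\proE}(Y_\bullet,\{X_i^K\})=\lim_i\colim_j\hom_{\cal S}\bigl(K,\hom_{\cat E}(Y_j,X_i)\bigr),
\qquad
\hom_{\cal S}\bigl(K,\hom_{\proE}(Y_\bullet,X_\bullet)\bigr)=\hom_{\cal S}\bigl(K,\lim_i\colim_j\hom_{\cat E}(Y_j,X_i)\bigr),
\]
and $\hom_{\cal S}(K,-)$ does not commute with the filtered colimit over $j$ when $K$ is infinite (this already fails for $K$ an infinite discrete set, where $\hom_{\cal S}(K,-)$ is an infinite product). Since your weights $W(c)\cong F(c)$ are arbitrary simplicial sets, you cannot restrict to finite $K$.

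The gap is repairable: write $K=\colim_\alpha K_\alpha$ over its finite subcomplexes, form $X_\bullet^{K_\alpha}$ levelwise, and set $X_\bullet^K=\lim_\alpha X_\bullet^{K_\alpha}$, using the classical fact that $\proE$ is complete as soon as $\cat E$ has finite limits (cofiltered limits exist by construction, finite limits are computed essentially levelwise, and every small limit is a cofiltered limit of finite ones); the same completeness argument produces the end $\int_c R(c)^{W(c)}$. With that supplied, your end/coend computation is correct, and the naturality-plus-Yoneda argument for independence of the chosen presentation is fine. So your proof can be completed, but the part you deferred as ``bookkeeping'' is precisely the mathematical content that the paper's finite-presentability argument is engineered to avoid.
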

\begin{proof}
We shall use the simpler adjunction (\ref{simpler-adjunction}) constructed above, Freyd's adjoint functor theorem and the fact that the category of small simplicial functors is class-finitely presentable \cite[2.2]{Chorny-Rosicky}.

Let $\cal S^{\cat E}\ni F = \colim_{i\in I} C_i$, where $C_i$ is a finitely presentable objects in $\cal S^{\cat E}$ for all $i$. Then,
\begin{multline*}
\hom_{\cal S^{\cat E}}(F, PX_{\bullet}) = \hom_{\cal S^{\cat E}}(\colim_{i\in I} C_i, \colim_{j\in J}R^{X_j}) = \lim_{i\in I}\hom_{\cal S^{\cat E}}(C_i, \colim_{j\in J}R^{X_j}) = \\
\lim_{i\in I}\colim_{j\in J}\hom_{\cal S^{\cat E}}(C_i, R^{X_j}) = \lim_{i\in I}\colim_{j\in J}\hom_{\cat E^{\op}}(ZC_i, X_j) = \\
\lim_{i\in I}\colim_{j\in J}\hom_{\cat E}(X_j, ZC_i) = \hom_{\proE}(\{X_j\},\{ZC_i\}) = \hom_{(\proE)^{\op}}(\{ZC_i\}, \{X_j\}).
\end{multline*}

The representation of $F$ as a filtered colimit of compact objects is not unique, but if we take any representation of this kind $F=\colim_{i\in I}C_i$, then the map $f\colon F\to\colim_{i\in I}R^{ZC_i} = P\{ZC_i\}$ is as a solution set, since, according to the computation above, every map $F\to PX_{\bullet}$ factors through $f$. Freyd's adjoint functor theorem
implies the existence of the left adjoint for $P$, and we can
compute its value, up to an isomorphism, by choosing a
representation for $F$ and assigning $OF = \{ZC_i\}_{i\in I}$.
\end{proof}

Let us assume now that $\cat E$ is a (stable) proper, combinatorial, simplicial model category. (Stability is not used in the following proposition, but will be used later). Then the category of small functors $\cal S^\cat E$ may be equipped with the fibrant-projective model structure constructed in \cite[3.6]{Duality}. Fibrant-projective weak equivalences and fibrations are the natural transformations of functors inducing levelwise weak equivalences or fibrations between their values in fibrant objects. The category of pro-objects in \cat E  may be equipped, in turn, with the strict model structure, \cite{Isaksen-strict}, where a map of pro-object is a weak equivalence or a cofibration if it is an essentially levelwise weak equivalence or an essentially levelwise cofibration. 

We conclude the categorical preliminaries by the following proposition that states, essentially, that the opposite of the homotopy category of $\proE$ is a co-reflective subcategory of the homotopy category of small functors with the fibrant-projective model structure.

\begin{proposition}\label{Quillen-map}
The pair of adjoint functors
\[
\xymatrix{
P\colon (\proE)^{\op} \ar@/^/[r] & \cal S^{\cat E} : \! O, \ar@/^/[l]
}
\]
constructed in Proposition~\ref{adjunction} is a Quillen pair if the category of small functors $\cal S^{\cat E}$ is equipped with the fibrant-projective model structure and the category $\proE$ is equipped with the strict model structure.
\end{proposition}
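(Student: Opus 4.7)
The plan is to verify that the right adjoint $P$ preserves fibrations and trivial fibrations, which gives the Quillen pair condition. Since fibrations and weak equivalences in the fibrant-projective model structure on $\cal S^{\cat E}$ are detected by evaluation at the fibrant objects of $\cat E$, the problem reduces to showing that certain maps of pointed simplicial sets are (trivial) fibrations.

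I would unwind both model structures at the point of application. A (trivial) fibration in $(\proE)^{\op}$ is, by Isaksen's strict model structure, the opposite of an essentially levelwise (trivial) cofibration $g\colon Y_\bullet \to X_\bullet$ in $\proE$, which after a cofinal reindexing may be assumed to be indexed by a common cofiltered category $K$ and to be levelwise a (trivial) cofibration in $\cat E$. For any fibrant $E\in\cat E$, applying $P$ to the opposite of $g$ and evaluating at $E$ yields, by the formula from Proposition~\ref{adjunction} specialized to the constant pro-object $E$, the map
\[
\colim_{k\in K^{\op}}\hom_{\cat E}(X_k,E)\longrightarrow \colim_{k\in K^{\op}}\hom_{\cat E}(Y_k,E)
\]
induced levelwise by precomposition with $g_k$.

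Since $\cat E$ is a simplicial model category and $E$ is fibrant, the SM7 axiom makes each $g_k^*\colon \hom_{\cat E}(X_k,E)\to \hom_{\cat E}(Y_k,E)$ a fibration (respectively trivial fibration) of pointed simplicial sets. Both classes are characterized by the right lifting property against the compact horn and boundary inclusions, hence are closed under filtered colimits of simplicial sets. The displayed colimit map is therefore a (trivial) fibration for every fibrant $E$, so $P$ preserves fibrations and trivial fibrations.

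The main obstacle is the bookkeeping around ``essentially levelwise'': one needs invariance of $P(X_\bullet)=\colim_i R^{X_i}$ under cofinal reindexing of pro-systems, and the fact that an essentially levelwise (trivial) cofibration may be represented by an honestly levelwise one after common reindexing. Both are standard features of pro-categories, and once they are in place the argument is purely formal; in particular it uses none of properness, combinatoriality, or stability of $\cat E$.
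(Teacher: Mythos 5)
Your proof is correct and follows essentially the same route as the paper's: reduce to showing the right adjoint $P$ preserves (trivial) fibrations, represent the strict (trivial) cofibration of pro-objects by an honestly levelwise one after reindexing, apply SM7 at each level against a fibrant target of $\cat E$, and pass to the filtered colimit using compactness of the generating (trivial) cofibrations of $\cal S$. The extra care you take (invariance under cofinal reindexing, the explicit SM7 step) merely fills in details the paper leaves implicit; the only caveat to your closing remark is that properness and combinatoriality of $\cat E$, while unused in this argument, are still needed for the fibrant-projective model structure on $\cal S^{\cat E}$ to exist in the first place.
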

\begin{proof}
It suffices to show that the right adjoint $P$ preserves fibrations and trivial fibrations.

Consider a trivial fibration or a fibration $f^{\op}\colon Y_{\bullet}\to X_{\bullet}$ in  $(\proE)^{\op}$, i.e., $f\colon X_{\bullet}\to Y_{\bullet}$ is a
trivial cofibration or a cofibration in the strict model
structure on \proE, which means $f$ is an essentially levelwise trivial
cofibration or an essentially levelwise cofibration, where `essentially' means `up to reindexing'.

Let $f_{i}\colon X_{i}\to Y_{i}$, $i\in I$ be a levelwise trivial
cofibration or a levelwise cofibration representing $f$. Recall
that $PX_{\bullet}=\colim_{i\in I}R^{X_i}$,
$PY_{\bullet}=\colim_{i\in I}R^{Y_i}$. Then $Pf\colon
\colim_{i\in I}R^{X_i}\to \colim_{i\in I}R^{Y_i}$ is a trivial
fibration or a fibration, respectively, in the fibrant-projective
model structure, since each $f_{i}$ induces a trivial fibration or
a fibration of representable functors in the fibrant-projective
model structure, and filtered colimits preserve levelwise trivial
fibrations and fibrations.
\end{proof}

\section{Construction of the linear model structure}\label{linear-model}
The main objective of our work is to classify linear functors from a stable simplicial combinatorial model category $\cat E$ to simplicial sets, up to homotopy. The most convenient way to do so is to define a linear model category structure on the category of small functors with fibrant objects being exactly the fibrant (i.e., assuming fibrant values in fibrant objects of \cat E) linear functors, and to find a familiar Quillen equivalent model for this category. This section is devoted to construction of the linear model structure.

In the previous work, \cite{BCR}, the linear model category of functors from spaces to spaces was constructed in two stages. First we defined the homotopy model structure on  $\cal S^{\cal S}$. Next, we used the technique of Goodwillie calculus to further localize the homotopy model structure and obtain the linear model structure. Goodwillie's construction was extended by Luis Pereira, \cite{Pereira}, to more general model categories, so we could proceed by the same route, but we prefer to give a straightforward construction of the linear model structure, also introducing an alternative method allowing for construction of Goodwillie's polynomial approximation in fairly arbitrary model categories.

We start from the fibrant-projective model structure on the category of small functors (i.e., weak equivalences and fibrations are levelwise in fibrant objects), since we need to compare our model category with pro-\cat E (see Section~\ref{preliminaries}). But the general technique of localization we are about to describe may be applied to the projective model structure also, as to any class-cofibrantly generated model structure on the category of small functors.

Fibrant linear functors are the local objects with respect to the following class of maps:
\[
\cal F = \left\{\hocolim(R^{B}\leftarrow R^D\to  R^C)\to R^A \,\left|\, 
\vcenter{
   \xymatrix{A
          \ar[r]
          \ar[d] & B
                   \ar[d]\\
          C
          \ar[r] & D
   }
} \text{ homotopy pushout in } \cat E \right.\right\}.
\]

We recall a few basic definitions for the reader's convenience. 

A fibrant object $W$ is called $\cal F$-\emph{local} if for all $f\in \cal F$ the induced map $\mor{f,W}$ is a weak equivalences of simplicial sets. As observed in \cite{Dwyer_localizations}, it is an easy computation based on Yoneda's lemma to show that fibrant linear functors are precisely the $\cal F$-local objects in $\cal S^{\cat E}$.

A map $f\colon F\to G$ is an $\cal F$-\emph{equivalence} if for every cofibrant replacement $\tilde f\to f$ and for every  $\cal F$-local functor $W$ the induced map $\hom(\tilde f, W)$ is  a  weak equivalence of simplicial sets.

The class of generating trivial cofibrations for the fibrant-projective model structure is
\[
\cal J = \{R^{A}\otimes K\cofib R^{A}\otimes L \,|\, A\in \cat E \text{ fibrant; } K\trivcofib L \text{ generating triv. cofibration in } \cal S\}.
\]

\subsection*{Construction of linear approximation}

Our construction relies on the recently developed fat small object argument \cite{fat}. Namely we use \cite[Corollary~5.1]{fat} stating in particular that in a $\kappa$-combinatorial model category every cofibrant object is a $\kappa$-filtered colimit of $\kappa$-presentable cofibrant objects. 

Here we collect all the assumptions about the model category \cat E necessary for constructing localizations in the functor category $\cal S^{\cat E}$. We assume throughout this section that \cat E is a simplicial, combinatorial model category. Since the range category $\cal S$ is strongly left proper (see \cite[Definition~4.6]{Dundas-Roendigs-Ostvaer} or \cite[Definition~3.5]{Duality})  by \cite[Theorem~3.6]{Duality} the functor category $\cal S^{\cat E}$ with the fibrant-projective model structure is left proper; it is also right proper, since \cal S is right proper. Let us fix a cardinal $\kappa$ such that the model category $\cat E$ is $\kappa$-combinatorial, i.e., the domains and the codomains of the generating (trivial) cofibrations are $\kappa$-presentable, and the class of weak equivalences is a $\kappa$-accessible subcategory of the category of morphisms of $\cat E$. Since $\cat E$ is an accessible category, every small functor $F\in \cal S^{\cat E}$ is $\mu$-accessible for some cardinal $\mu$. We do not require in this section that \cat E be a stable model category. We will need this assumption for the classification theorem only. 

\begin{definition}
Let $F\in \cal S^\cat E$ be a small functor of accessibility rank $\mu$. Put $\lambda_F = \max\{\kappa, \mu\}^+\rhd \max\{\kappa, \mu\}$, and denote by $\cat E_{\lambda_F}\subset \cat E$ the \emph{subset} of $\lambda_F$-presentable objects. We define
\[
\cal F_{\lambda_F} = \left\{\hocolim(R^{B}\leftarrow R^D\to  R^C)\to R^A \,\left|\, 
\vcenter{
   \xymatrix{A
          \ar[r]
          \ar[d] & B
                   \ar[d]\\
          C
          \ar[r] & D
   }
} \text{ homotopy pushout in } \cat E_{\lambda_F} \right.\right\}.
\]
and
\[
\cal J_{\lambda_F} = \{R^{A}\otimes K\cofib R^{A}\otimes L \,|\, A\in \cat E_{\lambda_F} \text{ fibrant; } K\trivcofib L \text{ generating triv. cofibration in } \cal S\}.
\]
\end{definition}

\begin{remark}
We have to choose the successor cardinal $\max\{\kappa, \mu\}^+$ to ensure that the subcategory of weak equivalences in $\cat E$ is still $\lambda_F$-accessible.
\end{remark}

\begin{proposition}\label{Q-construction}
For any $F\in \cal S^{\cat E}$ there exists an $\cal F$-equivalence $\eta_F\colon F\to QF$, such that $QF$ is a fibrant $\cal F$-local functor. 
\end{proposition}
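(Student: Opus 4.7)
The strategy is to run a non-functorial small object argument against the \emph{set} $\cal F_{\lambda_F}\cup \cal J_{\lambda_F}$, invoking the fat small object argument of \cite{fat} to replace the proper class $\cal F\cup \cal J$ by a set depending on $F$. Factoring $F\to\ast$ yields a transfinite composition $F = F_0\cofib F_1\cofib\cdots$, where each step is a pushout of an (enriched) generator; since both sets have $\lambda_F$-presentable source and target while $F$ is $\mu$-accessible with $\mu<\lambda_F$, the process stabilizes at stage $\lambda_F$, giving a cofibration $\eta_F\colon F\cofib QF$ with $QF$ still small and $QF\to\ast$ having the right lifting property against the generating set.

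Three verifications then remain. First, $\eta_F$ is an $\cal F$-equivalence: it is a transfinite composition of pushouts of maps in $\cal F_{\lambda_F}\subseteq \cal F$ (which are $\cal F$-equivalences by definition of $\cal F$-locality) and of maps in $\cal J_{\lambda_F}\subseteq \cal J$ (which are fibrant-projective trivial cofibrations, hence $\cal F$-equivalences). Left properness of the fibrant-projective structure ensures these classes are closed under pushout and transfinite composition. Second, $QF$ is fibrant: the RLP against the enriched pushout-product of $\cal J_{\lambda_F}$ implies $QF(A)$ is a Kan complex for every $\lambda_F$-presentable fibrant $A\in \cat E$, and one extends to arbitrary fibrant $A$ using that $QF$ preserves $\lambda_F$-filtered colimits together with \cite[Cor.~5.1]{fat}, which presents $A$ as such a colimit of $\lambda_F$-presentable objects.

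The main step, and the principal obstacle, is showing that $QF$ is $\cal F$-local rather than merely $\cal F_{\lambda_F}$-local. The RLP against $\cal F_{\lambda_F}$ gives $\cal F_{\lambda_F}$-locality directly by Yoneda. To bootstrap to $\cal F$-locality, one exploits the choice $\lambda_F = \max\{\kappa,\mu\}^+$, which guarantees that weak equivalences in $\cat E$ form a $\lambda_F$-accessible subcategory: every homotopy pushout square in $\cat E$ then decomposes as a $\lambda_F$-filtered colimit of homotopy pushout squares with vertices in $\cat E_{\lambda_F}$. Applying $QF$, which preserves $\lambda_F$-filtered colimits, and using that filtered colimits in $\cal S$ commute with finite homotopy limits (in particular with homotopy pullbacks), the $\cal F$-locality condition for a general square becomes a filtered colimit of the already-established locality conditions for small squares, each of which is a weak equivalence. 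Hence so is the colimit, completing the proof. The careful calibration of $\lambda_F$ and the compatibility between homotopy pushout decomposition and the accessibility of $QF$ are the delicate points that make this bootstrap argument work.
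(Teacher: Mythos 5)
Your plan is essentially the paper's proof: restrict to the generating set indexed by $\lambda_F$-presentable objects with $\lambda_F=\max\{\kappa,\mu\}^+$, run the small object argument on $F\to\ast$ to get $F\cofib QF\fibr\ast$ with $QF$ a $\lambda_F$-accessible functor, bootstrap from $\cal F_{\lambda_F}$-locality to $\cal F$-locality by writing an arbitrary homotopy pushout as a $\lambda_F$-filtered colimit of homotopy pushouts of $\lambda_F$-presentable objects and commuting the filtered colimit past the homotopy pullback, and finish by noting that every $\cal F$-local functor is $\cal F_{\lambda_F}$-local, so $\cal F_{\lambda_F}$-equivalences are $\cal F$-equivalences. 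Two points where the paper is more careful than your sketch: (i) the generating set is $\Hor(\cal F_{\lambda_F})\cup\cal J_{\lambda_F}$ --- one must replace the maps of $\cal F_{\lambda_F}$ by cofibrations and box-product with the generating cofibrations of $\cal S$ to obtain locality at the level of simplicial mapping spaces rather than mere lifting (your parenthetical ``enriched'' gestures at this but the set you name is not the right one as written); (ii) the decomposition of a general homotopy pushout square into small ones is not a direct consequence of the $\lambda_F$-accessibility of the weak equivalences, as you assert --- the paper first replaces the square by a genuine pushout of a cofibrant span and applies \cite[Corollary~5.1]{fat} in the projective model structure on $\cat E^{\cdot \leftarrow \cdot \to \cdot}$ to get $\lambda_F$-presentable \emph{cofibrant} approximating spans, which in turn forces an extra verification that $QF$ preserves the comparison weak equivalences (handled by feeding the degenerate homotopy pushout square on a weak equivalence of $\lambda_F$-presentable objects into the $\cal F_{\lambda_F}$-locality of $QF$). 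Neither point changes the architecture of your argument, but both would need to be supplied to make the bootstrap step airtight.
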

\begin{proof}
We form the set of horns on $\cal F_{\lambda_F}$ by first
replacing every map in $\cal F_{\lambda_F}$ with a cofibration,
obtaining the set $\tilde{\cal F}_{\lambda_F}$, and then forming a
box product with every generating cofibration in $\cal S$:
\[
\Hor(\cal F_{\lambda_F})=\{
A\otimes \Delta^n \coprod_{A\otimes \partial\Delta^n} B\otimes \partial\Delta^n \to B\otimes \Delta^n \,|\,
(A\cofib B) \in \tilde{\cal F}_{\lambda_F} \text{ and } n\geq 0
\}
\]

A simple adjunction argument implies (see e.g. \cite{Hirschhorn}) that if a fibration $X\to \ast$ has the right lifting property with respect to $\Hor(\cal F_{\lambda_F})$, $X$ is $\cal F_{\lambda_F}$-local, and therefore, to construct a localization of a small functor $F\in \cal S^\cat E$ with respect to $\cal F_{\lambda_F}$, it suffices to apply the small object argument for the map $F\to \ast$ with respect to the set $\cal L= \Hor(\cal F_{\lambda_F})\cup \cal J_{\lambda_F}$. We obtain a factorization $F\cofib Q(F)\fibr \ast$, where the cofibration is an $\cal L$-cellular map and the fibration has the right lifting property with respect to $\cal K$.

We omit the standard verification based on the left properness of $\cal S^\cat E$, \cite[Section~4]{Duality}, that the cofibration $\eta_F\colon F\cofib QF$ is an $\cal F_{\lambda_F}$-equivalence, and conclude that $QF$ is a homotopy localization of $F$ with respect to $\cal F_{\lambda_F}$.

Notice that $QF$ is obtained as a colimit of $\lambda_F$-accessible functors, and therefore $QF$ is itself a $\lambda_F$-accessible functor. 

For any homotopy pushout square we compute the corresponding homotopy pushout square of cofibrant objects
\begin{equation}\label{cube}
\xymatrix{
\tilde A
\ar@{->>}[dr]^{\dir{~}}
\ar@{^(->}[rr]
\ar@{_(->}[dd]
&	&	\tilde{B}
	\ar@{->>}[dr]^{\dir{~}}
	\ar'[d][dd]\\
&	A
	\ar[rr]
	\ar[dd]
&	&	B
		\ar[dd]\\
\tilde{C}
\ar@{->>}[dr]^{\dir{~}}
\ar'[r][rr]
	&	&	\tilde{B}\coprod_{\tilde{A}}\tilde{C}
			\ar[dr]^{\dir{~}}		\\
&	C
	\ar[rr]
&		&D,
}
\end{equation}
obtaining a cofibrant object  $\tilde C\hookleftarrow \tilde A \hookrightarrow \tilde B$ in $\cat E^{\cdot \leftarrow \cdot \to \cdot}$ with the projective model structure. Left properness of \cat E implies, by the cube lemma, \cite[ 5.2.6]{Hovey}, that the induced map $\tilde{B}\coprod_{\tilde{A}}\tilde{C}\to D$ is a weak equivalence. Now we apply \cite[Corollary~5.1]{fat} to the class of cofibrant objects in the category $\cat E^{\cdot \leftarrow \cdot \to \cdot}$ with the projective model structure and obtain the diagram $\tilde C\hookleftarrow \tilde A \hookrightarrow \tilde B$ as a $\lambda_F$-filtered colimit of diagrams $\{C_i\hookleftarrow A_i\hookrightarrow B_i \}_{i\in I}$ with $A_i, B_i, C_i$ cofibrant objects of $\cat E_{\lambda_F}$ for all $i\in I$.

Since \cat E is $\lambda_F$-combinatorial, the $\lambda_F$-filtered colimits are homotopy colimits, hence the induced map $\colim_{i\in I}( C_i \coprod_{A_i} B_i)  \to \tilde{B}\coprod_{\tilde{A}}\tilde{C}$ is a weak equivalence.

Because $QF$ is a $\lambda_F$-accessible functor, and since $QF$ is $\cal F_{\lambda_F}$-local, it converts homotopy pushouts of $\lambda_F$-presentable objects into homotopy pullbacks. Since filtered colimits commute with homotopy pullbacks, we obtain

\begin{multline*}
QF\tilde A = QF(\colim_{i\in I}A_i) = \colim_{i\in I}QFA_i = \colim_{i\in I} \holim \left(QFC_i\rightarrow QF\left(C_i\coprod_{A_i} B_i\right) \leftarrow QFB_i\right) =\\
\holim \colim_{i\in I} \left(QFC_i\rightarrow QF\left(C_i\coprod_{A_i} B_i\right) \leftarrow QFB_i\right) =\\
\holim \left(QF(\colim_{i\in I} C_i)\rightarrow QF\colim_{i\in I} \left(C_i\coprod_{A_i} B_i\right) \leftarrow QF(\colim_{i\in I} B_i)\right) =\\
\holim \left(QF(\tilde C)\rightarrow QF\left(\tilde C\coprod_{\tilde A} \tilde B\right) \leftarrow QF(\tilde B)\right).
\end{multline*}
Moreover, the functor $QF$ preserves the slanted weak equivalences in (\ref{cube}), since each  of these maps may be presented as a $\lambda_F$-filtered colimit of weak equivalences of $\lambda_F$-presentable objects, and the latter are preserved by $QF$, since for every weak equivalence of $\lambda_F$-presentable objects $\varphi\colon X\we Y$, $QF$ converts homotopy pushouts of the form
\[
\xymatrix{
X
\ar[r]^{\dir{~}}_\varphi
\ar[d]_{\dir{~}}^\varphi
	& Y
	  \ar@{=}[d]\\
Y
\ar@{=}[r]
	& Y
}
\] to homotopy pullbacks.

Therefore $QF$ takes any homotopy pushout square to a homotopy pullback square. In other words, $QF$ is \cal F-local or linear. In addition, the coaugmentation map $\eta_F\colon F\cofib QF$ is an $\cal F_{\lambda_F}$-equivalence by construction, but every $\cal F$-local functor is also $\cal F_{\lambda_F}$-local, and hence every $\cal F_{\lambda_F}$-equivalence is also an $\cal F$-equivalence. We conclude that $QF$ is a (non-functorial) homotopy localization of $F$ with respect to $\cal F$.
\end{proof}

The construction $Q(-)$ depends on the accessibility rank of the entry functor, therefore fails to be functorial though it can easily be made functorial on any subcategory of functors of limited accessibility rank. We define separately what it does on maps. 

For every natural transformation of functors $f\colon F\to G$, we define $Qf$ as a lifting in the diagram
\[
\xymatrix{
F
\ar@{^(->}[d]_{\eta_F}
\ar[r]^f
      &  G
         \ar[r]^{\eta_G} & QG
                 \ar@{->>}[d]\\
QF
\ar[rr]
\ar@{-->}[urr]|{Qf}
       &  &  \ast
}
\]
The lift exists since the left vertical map is $\cal L$-cellular and the right vertical map
is $\cal L$-injective by construction.

Perhaps we will have to choose $Qf$ out of many maps that are
simplicially homotopic to each other, but the important property satisfied by
any of these choices is the commutativity of the square

\begin{equation} \label{A2}
\xymatrix{
F
\ar@{^(->}[d]_{\eta_F}
\ar[r]^f
      &  G
         \ar@{^(->}[d]^{\eta_G} \\
QF
\ar[r]_{Qf}
       & QG.
}
\end{equation}

The following proposition is a standard property of localization constructions.

\begin{proposition}\label{Q-equiv=F-equiv}
Let $f\colon F\to G$ be a natural transformation of two functors in $\cal S^{\cat E}$, then $Qf$ is a weak equivalence if and only if $f$ is an $\cal F$-equivalence. 
\end{proposition}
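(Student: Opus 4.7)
The plan is to read this off the commuting square~(\ref{A2}), using two standard facts about the class of $\cal F$-equivalences. Recall from the proof of Proposition~\ref{Q-construction} that the coaugmentations $\eta_F\colon F\cofib QF$ and $\eta_G\colon G\cofib QG$ are $\cal F$-equivalences (they are $\cal L$-cellular, hence $\cal F_{\lambda_F}$-equivalences, and every $\cal F_{\lambda_F}$-equivalence is an $\cal F$-equivalence because each $\cal F$-local functor is $\cal F_{\lambda_F}$-local), while $QF$ and $QG$ are fibrant $\cal F$-local.

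The first auxiliary fact is the 2-out-of-3 property for $\cal F$-equivalences. Given a composable pair in $\cal S^{\cat E}$, a compatible cofibrant replacement of the two arrows is produced by iterated factorisation; applying the simplicial $\hom(-,W)$ to such a replacement for any $\cal F$-local $W$ reduces the question to 2-out-of-3 for simplicial weak equivalences. Combined with the elementary observation that every weak equivalence in $\cal S^{\cat E}$ is an $\cal F$-equivalence (since fibrant simplicial functors preserve weak equivalences between cofibrant inputs), this already handles the forward implication: if $Qf$ is a weak equivalence then $\eta_G\circ f=Qf\circ\eta_F$ is an $\cal F$-equivalence, and dividing out by $\eta_G$ shows that $f$ is an $\cal F$-equivalence.

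The second auxiliary fact is a Whitehead-style principle: an $\cal F$-equivalence between fibrant $\cal F$-local functors is itself a weak equivalence. Assuming $f$ is an $\cal F$-equivalence, 2-out-of-3 applied to (\ref{A2}) delivers $Qf$ as an $\cal F$-equivalence between the fibrant $\cal F$-local functors $QF$ and $QG$. To finish, I would take a cofibrant approximation $\widetilde{Qf}\colon P\to R$ of $Qf$ and test $\cal F$-locality against $W=QF$ and $W=QG$; the resulting simplicial weak equivalences $\hom(R,W)\to\hom(P,W)$ induce bijections $[QG,W]\to[QF,W]$ in $\Ho(\cal S^{\cat E})$, from which a two-sided homotopy inverse of $Qf$ is extracted by the usual two-step Yoneda argument (lifting $\id_{QF}$ through $W=QF$ for a left inverse, then comparing two preimages of $Qf$ in $[QF,QG]$ to upgrade it), proving that $Qf$ is a weak equivalence.

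The only substantial obstacle is the Whitehead principle of the last step. Its proof depends on the availability of cofibrant approximations of morphisms in $\cal S^{\cat E}$ and on identifying the homotopy classes $[X,Y]$ with $\pi_0\hom(\tilde X,\hat Y)$, both of which are afforded by the class-cofibrantly generated, left and right proper structure established in~\cite{Duality}. Once this ingredient is granted, the proposition becomes a formal diagram chase in (\ref{A2}) combined with the fact that $\eta_F$ and $\eta_G$ are already known to be $\cal F$-equivalences.
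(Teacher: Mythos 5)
Your argument is correct and follows essentially the same route as the paper: the two-out-of-three property for $\cal F$-equivalences applied to the square~(\ref{A2}) (together with the observation that weak equivalences are $\cal F$-equivalences), plus the $\cal F$-local Whitehead theorem for the fibrant $\cal F$-local objects $QF$ and $QG$ --- the only difference being that the paper simply cites \cite[3.2.13]{Hirschhorn} for the Whitehead step where you sketch the standard homotopy-inverse argument.
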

\begin{proof}
If $f$ is an $\cal F$-equivalence, then so is $Qf$ by the `2-out-of-3' property for $\cal F$-equivalences in the commutative square (\ref{A2}). Hence $Qf$ is an $\cal F$-local equivalence of $\cal F$-local objects, i.e., a weak equivalence by the $\cal F$-local Whitehead theorem, \cite[3.2.13]{Hirschhorn}.

Conversely, if $Qf$ is a weak equivalence, then $Qf$ is also an $\cal F$-equivalence and, by the `2-out-of-3' property, in (\ref{A2}) again, we conclude that $f$ is an $\cal F$-equivalence. 
\end{proof}

To establish the existence of the left Bousfield localization with respect to $Q$ we need to use an alternative construction of the linearization by L.~A.~Pereira, \cite{Pereira}. This is a generalization of Goodwillie's technique from \cite{Goo:calc3} and its advantage is that the construction uses only finite homotopy pullbacks and filtered colimits of functors and therefore commutes with finite homotopy limits by \cite[Proposition~4.10]{Pereira}, allowing for verification of the analog of Bousfield-Fridlander A6 condition. The disadvantage is that it applies only to homotopy functors.  

The first stage of application of Pereira's linearization is therefore a replacement of a functor by a homotopy functor. One way of doing it is to localize $\cal S^{\cat E}$ with respect to the class of maps $\{R^B\to R^A\, |\, A\we B \text{ weak equivalence in } \cat E\}$. See for example \cite{Chorny-ClassHomFun} for a similar construction when $\cat E = \Sp$. The problem with this approach is, that similarly to the construction of $Q$ we cannot make sure that the A6 condition is satisfied, unless the theory of equivariant nullifications is worked out, or the  theory of localizations in class-combinatorial model categories \cite{Chorny-Rosicky} is sufficiently developed. 

\subsection*{Assumption} To construct this localization only, we shall assume from now on that all objects of $\cat E$ are cofibrant. If this is not so, we can replace our combinatorial model category $\cat E$ with  a Quillen equivalent model in which all objects are cofibrant, \cite{Dugger-generation}.

\begin{proposition}\label{simplicial-functors}
Let \cat E be a simplicial model category, and $F\colon \cat E\to \cal S$ a simplicial functor. Suppose that $f,g\colon E_1\to E_2$ are simplicially homotopic maps in \cat E, then $Ff, Fg\colon FE_1 \to FE_2$ are simplicially homotopic maps in $\cal S$. 
\end{proposition}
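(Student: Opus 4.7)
The plan is to use only the enrichment structure of $F$; nothing about the model category structure will be needed. Recall that a simplicial homotopy between parallel maps $f,g\colon E_1\to E_2$ in a simplicial category is, by definition, a $1$-simplex $H\in \hom_{\cat E}(E_1,E_2)_1$ whose face operators satisfy $d_0H = f$ and $d_1H = g$ (equivalently, a map $\Delta^1\to\hom_{\cat E}(E_1,E_2)$ of simplicial sets carrying the two $0$-vertices to $f$ and $g$). The existence of such an $H$ is the hypothesis of the proposition.

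Next I would unpack what it means for $F\colon \cat E\to \cal S$ to be a simplicial functor: by definition this supplies, for every pair of objects $E_1,E_2\in \cat E$, a map of (pointed) simplicial sets
\[
F_{E_1,E_2}\colon \hom_{\cat E}(E_1,E_2)\longrightarrow \Map_{\cal S}(FE_1, FE_2)
\]
that is compatible with composition and units, and whose effect on $0$-simplices agrees with the action of $F$ on morphisms of the underlying categories; in particular $F_{E_1,E_2}(f)=Ff$ and $F_{E_1,E_2}(g)=Fg$.

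The verification is then immediate: I would apply $F_{E_1,E_2}$ to the $1$-simplex $H$, producing a $1$-simplex
\[
\widetilde H := F_{E_1,E_2}(H)\in \Map_{\cal S}(FE_1, FE_2)_1.
\]
Since $F_{E_1,E_2}$ is a map of simplicial sets it commutes with the face operators, so $d_0\widetilde H = F_{E_1,E_2}(d_0 H) = Ff$ and $d_1\widetilde H = F_{E_1,E_2}(d_1 H) = Fg$. This is precisely a simplicial homotopy from $Ff$ to $Fg$ in $\cal S$. Honestly there is no genuine obstacle here: the statement is essentially a tautology once the enrichment datum is written down. The only fact one needs to remember is that the underlying ordinary category of a simplicial category is recovered by taking $0$-simplices of the mapping spaces, so that the action of $F_{E_1,E_2}$ on vertices coincides with the ordinary functor $F$ applied to morphisms.
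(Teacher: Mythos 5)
Your proof is correct and is essentially the paper's proof: both apply the enrichment structure map $F_{E_1,E_2}\colon \hom_{\cat E}(E_1,E_2)\to\hom_{\cal S}(FE_1,FE_2)$ of the simplicial functor to the homotopy datum. The one discrepancy is definitional: the paper, following Hirschhorn, takes a simplicial homotopy to be a map out of a generalized interval $J$ (a finite chain of copies of $\Delta^1$ glued end to end), not a single $1$-simplex, because the mapping spaces of $\cat E$ need not be Kan complexes and ``simplicially homotopic'' is the equivalence relation generated by single $1$-simplices. Your argument applies unchanged link by link along such a chain (equivalently, apply the structure map raised to the power $J$ rather than to a single $1$-simplex, as the paper does), so nothing substantive is lost.
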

\begin{proof}
Let $h\in\hom_{\cat E}(E_1\otimes J,E_2)=\hom_{\cat E}(E_1,E_2)^J$ be a simplicial homotopy between $f$ and $g$, where $J$ a generalized interval, \cite[9.5.5]{Hirschhorn}. Every simplicial functor $F$ is equipped with a natural map $\varepsilon\colon \hom_{\cat E}(E_1,E_2)\to \hom_{\cal S}(FE_1,FE_2)$.

Then the simplicial homotopy between $Ff$ and $Fg$ is $\varepsilon^J(h)\in\hom_{\cal S}(FE_1,FE_2)^J=\hom_{\cal S}(FE_1\otimes J,FE_2)$.
\end{proof}

\begin{corollary}\label{simplicial=homotopy}
Every simplicial functor $F\colon \cat E\to \cal S$ preserves weak equivalences between objects which are both fibrant and cofibrant.
\end{corollary}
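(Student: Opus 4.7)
The plan is to combine Proposition~\ref{simplicial-functors} with the standard fact that, in any simplicial model category, a weak equivalence between objects that are simultaneously fibrant and cofibrant is a simplicial homotopy equivalence (Quillen; see also \cite[9.5.16]{Hirschhorn}).

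Concretely, let $\varphi\colon E_1\we E_2$ be a weak equivalence between fibrant-cofibrant objects of $\cat E$. By the cited result there exists a map $\psi\colon E_2\to E_1$ together with a generalized interval $J$ and simplicial homotopies
\[
h_1\in \hom_{\cat E}(E_1\otimes J, E_1),\qquad h_2\in \hom_{\cat E}(E_2\otimes J, E_2)
\]
witnessing $\psi\varphi\simeq \id_{E_1}$ and $\varphi\psi\simeq \id_{E_2}$ respectively. Applying Proposition~\ref{simplicial-functors} to each of these homotopies, we obtain simplicial homotopies in $\cal S$ between $F\psi\circ F\varphi$ and $\id_{FE_1}$, and between $F\varphi\circ F\psi$ and $\id_{FE_2}$. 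Hence $F\varphi$ is a simplicial homotopy equivalence in $\cal S$.

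Finally, every simplicial homotopy equivalence of pointed simplicial sets (with respect to a generalized interval) is a weak equivalence: it induces mutually inverse maps on all simplicial homotopy groups, since simplicially homotopic maps induce equal maps on $\pi_\ast$. Consequently $F\varphi$ is a weak equivalence in $\cal S$, which is exactly the assertion of the corollary.

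The only subtle point is the appeal to the fact that weak equivalences between fibrant-cofibrant objects in a simplicial model category are simplicial homotopy equivalences; this is where the cofibrancy-fibrancy hypothesis enters, and it is a standard (though non-trivial) ingredient of the theory that we import without reproving. Everything else is a formal consequence of the previous proposition.
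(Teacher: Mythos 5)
Your proposal is correct and follows essentially the same route as the paper: both reduce to the standard fact that a weak equivalence between fibrant--cofibrant objects is a simplicial homotopy equivalence, apply Proposition~\ref{simplicial-functors} to transport the homotopies, and conclude that the resulting simplicial homotopy equivalence in $\cal S$ is a weak equivalence. The only cosmetic difference is that the paper passes through ordinary homotopy equivalences first (citing Hirschhorn~9.5.24(2)), while you invoke the combined statement directly.
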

\begin{proof}
Weak equivalences between fibrant and cofibrant objects are homotopy equivalences, hence by \cite[9.5.24(2)]{Hirschhorn}, simplicial homotopy equivalences. Proposition~\ref{simplicial-functors} implies then that any simplicial functor preserves simplicial homotopy equivalences and hence weak equivalences between fibrant and cofibrant objects.
\end{proof}

Let $\fib\colon \cat E \to \cat E$ be an accessible fibrant replacement functor equipped with a natural transformation $\eta \colon \Id_{\cat E}\to \fib$, then according to the corollary above, for any $F\in \cal S^{\cat E}$, there is a fibrant-projectively equivalent homotopy functor $F\eta\colon F\we F\circ \fib$. 

Recall that $Q$-fibrations are the maps with the right lifting properties with respect to the fibrant-projective cofibrations that are also $Q$-equivalences.

\begin{theorem}\label{Bousfield-localization}
Let \cat E be a combinatorial simplicial model category such that all objects of \cat E  are cofibrant. Then there exists the left Bousfield localization of the fibrant-projective model structure with respect to $Q$. In other words, there exists a model structure on the category of small functors $\cal S^{\cat E}$ with weak-equivalences being the $Q$-equivalences and the fibrations being the $Q$-fibrations. 
\end{theorem}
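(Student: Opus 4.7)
The plan is to apply the Bousfield--Friedlander localization theorem, but since the construction $Q$ of Proposition~\ref{Q-construction} is not functorial, the first step is to replace it by a naturally equivalent functorial linearization. Following the strategy hinted at in the text, I would set $\tilde Q F = P_1(F\circ \fib)$, where $\fib$ is the accessible fibrant replacement in $\cat E$ and $P_1$ is Pereira's linearization from \cite{Pereira}. The hypothesis that every object of $\cat E$ is cofibrant, together with Corollary~\ref{simplicial=homotopy}, guarantees that the natural map $F\to F\circ \fib$ is a fibrant-projective weak equivalence and that $F\circ \fib$ is a homotopy functor, so Pereira's construction applies and produces an $\cal F$-equivalence $F\to \tilde Q F$ to a fibrant $\cal F$-local functor. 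Since both $QF$ and $\tilde Q F$ are $\cal F$-local fibrant functors receiving $\cal F$-equivalences from $F$, the $\cal F$-local Whitehead theorem produces an essentially unique fibrant-projective weak equivalence between them, so a map $f$ is a $Q$-equivalence (in the sense of Proposition~\ref{Q-equiv=F-equiv}) if and only if $\tilde Q f$ is a fibrant-projective weak equivalence.

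With this functorial replacement in hand, the Bousfield--Friedlander axioms to verify on the left-proper fibrant-projective model category $\cal S^{\cat E}$ are: (A1) that $\tilde Q$ preserves fibrant-projective weak equivalences; (A2) that the two natural maps $\tilde Q\eta_F$ and $\eta_{\tilde Q F}$ from $\tilde Q F$ to $\tilde Q\tilde Q F$ are weak equivalences; and (A3) the pullback/cube compatibility of $\tilde Q$-equivalences with fibrations. Axiom (A1) is immediate: $(-)\circ \fib$ sends fibrant-projective weak equivalences to objectwise weak equivalences of homotopy functors, and Pereira's $P_1$ preserves such weak equivalences. Axiom (A2) holds because $\tilde Q F$ is already linear, so both natural maps are $\cal F$-equivalences between $\cal F$-local objects, hence weak equivalences by the $\cal F$-local Whitehead theorem \cite[3.2.13]{Hirschhorn}.

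The main obstacle is axiom (A3): the class of $\tilde Q$-equivalences must be closed under pullback along fibrant-projective fibrations between fibrant objects. This is precisely why Pereira's construction was invoked in place of $Q$. By \cite[Proposition~4.10]{Pereira}, $P_1$ commutes with finite homotopy limits, because it is assembled from filtered colimits and finite homotopy pullbacks of spaces, and filtered colimits commute with finite homotopy limits in $\cal S$. Precomposition with $\fib$ obviously preserves homotopy pullbacks of fibrant functors, so $\tilde Q$ takes homotopy pullback squares of fibrant functors to homotopy pullback squares, from which the cube condition follows by a routine diagram chase together with the right properness of $\cal S^{\cat E}$. Once (A1)--(A3) are established, Bousfield--Friedlander yields the desired left Bousfield localization with weak equivalences the $\tilde Q$-equivalences, equivalently the $Q$-equivalences, and fibrations the $Q$-fibrations; the fibrant objects are precisely the fibrant linear functors, as forced by the construction of $Q$ in Proposition~\ref{Q-construction}.
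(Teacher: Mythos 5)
Your proposal reorganizes the same ingredients the paper uses, but along a genuinely different route. The paper keeps the non-functorial $Q$ of Proposition~\ref{Q-construction} as the localization construction and runs the \emph{non-functorial} Bousfield--Friedlander machinery of \cite[Theorem~A.8]{Duality}, checking conditions A.2--A.6; Pereira's $P_1$ enters only once, in the verification of the right-properness-type condition A.6, which is exactly your axiom (A3). You instead promote $P_1(-\circ\fib)$ to the role of the localization functor itself and invoke the classical, functorial Bousfield--Friedlander theorem. If the gaps below are filled, this is arguably cleaner: genuine functoriality of $\tilde Q$ makes the paper's condition A.5 (choosing one cardinal $\lambda$ that localizes all corners of a square coherently) unnecessary, and your identification of $\tilde Q$-equivalences with $Q$-equivalences via the $\cal F$-local Whitehead theorem is the right way to recover the statement as phrased.

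Two points need attention before the argument closes. First, for $\tilde Q$ to be a coaugmented endofunctor of $\cal S^{\cat E}$ at all, you must check that $P_1(F\circ\fib)$ is again a \emph{small} functor: Pereira's $T_1$ evaluates $F$ on finite homotopy colimits of the argument and then takes a finite homotopy limit in $\cal S$, and $P_1$ is a countable homotopy colimit of iterates of $T_1$; since $\fib$ is accessible, finite homotopy limits in $\cal S$ commute with filtered colimits, and accessible functors out of the combinatorial category $\cat E$ are exactly the small ones, the claim is true, but it is not automatic and you never address it. Second, your entire comparison (and your axiom (A2)) rests on the assertion that the coaugmentation $F\to P_1(F\circ\fib)$ is an $\cal F$-\emph{equivalence}, i.e.\ that Pereira's $1$-excisive approximation coincides with the left Bousfield localization at the class $\cal F$. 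This does not follow formally from $P_1(F\circ\fib)$ being linear; it is a substantive property of Goodwillie's construction (of the kind established in \cite{BCR} and \cite{Pereira}) and must be cited or proved. The paper incurs the same debt in its verification of A.6, so this is a shared gap rather than a defect unique to your route, but in your proof it carries all the weight. Finally, since the fibrant-projective model structure is only class-cofibrantly generated, make sure the version of Bousfield--Friedlander you invoke does not presuppose set-generated or functorial factorizations; the safe reference is again \cite[Theorem~A.8]{Duality}, whose remaining hypotheses your functorial $\tilde Q$ satisfies with less effort than the paper's $Q$. (A last small point: $P_1(F\circ\fib)$ need not have fibrant values, so ``fibrant $\cal F$-local'' requires an additional fibrant replacement.)
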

\begin{proof}
We will use the non-functorial version of the Bousfiled-Friedlander localization technique developed in \cite[Appendix~A]{Duality}. We have constructed so far a non-functorial localization construction $Q$ in Proposition~\ref{Q-construction}. To complete the proof of the theorem we need to verify the conditions A.2-6 of \cite[Theorem~A.8]{Duality}. 

Condition A.2 was verified in the construction of $Q$, see diagram \ref{A2}. Conditions A.3 and A.4 follow from Proposition~\ref{Q-equiv=F-equiv}, since $\cal F$-equivalences are closed under retracts and satisfy the `2-out-of-3' property.  

Condition A.5 is verified by choosing a cardinal $\lambda$ sharply bigger than the accessibility ranks of the functors constituting a commutative square, and then applying the functorial small object argument with respect to $\Hor(\cal F_{\lambda})\cup \cal J_\lambda$.

Condition A.6 relies on our assumption that all objects of \cat E are cofibrant, but we believe  it is possible to prove it more generally. Given a pullback square 
\[\xymatrix{
W
\ar[r]
\ar[d]_g & X
		\ar[d]^f \\
Z
\ar[r]_h & Y
}\]
with $h$ a $Q$-fibration and $f$ a $Q$-equivalence, if we precompose each functor with the fibrant approximation functor $\fib\colon\cat E\to \cat E$, then we obtain a homotopy pullback square of homotopy functors by Corollary~\ref{simplicial=homotopy} and by assuming that all objects in \cat E are cofibrant. Notice that precomposition with $\fib$ produces a weakly equivalent functor in the fibrant-projective model structure. Applying \cite[Proposition~4.10]{Pereira}, we obtain a homotopy pullback square
\[\xymatrix{
P_1\circ W\circ\fib
\ar[r]
\ar[d]_{P_1(g\circ \fib)} & P_1\circ X\circ\fib
		\ar[d]^{P_1(f\circ \fib)} \\
P_1\circ Z\circ \fib
\ar[r] & P_1\circ Y\circ\fib,
}\]
in which $P_1(f\circ \fib)$ is a weak equivalence, hence its base change $P_1(g\circ \fib)$ is a weak equivalence too. Therefore $g$ is an $\cal F$-equivalence and, by Proposition~\ref{Q-equiv=F-equiv}, $g$ is also a $Q$-equivalence.

This completes the required verification and we conclude that if all objects of $\cat E$ are cofibrant, then $\cal S^\cat E$ may be equipped with the $Q$-local model structure.
\end{proof}

\section{Classification of small linear functors}\label{section-classification}

In this section, we present a classification of small linear functors from a stable (= pointed model category such that its homotopy category is triangulated) combinatorial model category $\cat E$ to pointed simplicial sets $\cal S$. These are the small functors taking homotopy pushouts (=homotopy pullbacks) to homotopy pullbacks. Since every small functor $F\in \cal S^{\cat E}$ is a weighted colimit of representable functors, it sends  the zero object of \cat E to the one-point space (i.e. every small functor is reduced in the pointed situation). 

Let $\cal F$ be the class of maps ensuring that $\cal F$-local objects are precisely the fibrant linear functors.
Namely,
\[
\cal F=\left\{\left.
\hocolim\left(
\vcenter{
\xymatrix@=10pt{
R^{D}
\ar[r]
\ar[d]  & R^{B}\\
R^{C}
}
}
\right)
\longrightarrow R^{A}
\right|
\vcenter{
\xymatrix@=10pt{
A
\ar[r]
\ar[d] & B
             \ar[d]\\
C
\ar[r]  &  D
}
}
\text{-- homotopy pullback in}\, \cat E
\right\}.
\]

Our goal is to show that every linear functor is (fibrant-projectively) weakly equivalent to a filtered colimit of functors represented in cofibrant objects, i.e. to an image of a cofibrant pro-spectrum under the restricted Yoneda embedding $P$ constructed in Section~\ref{preliminaries}.

We begin with the lemma stating that filtered colimits of representable functors are closed under filtered colimits. In other words, filtered colimits of filtered colimits of representable functors are again filtered colimits.

It follows from the fact that if $F$ is a filtered colimit of filtered colimits of representable functors, then $F=P(O(F))$, hence it is a pro-representable functor.

\begin{lemma}\label{filtered-colimit}
The full subcategory generated by the filtered colimits of representable functors is closed under filtered colimits in $\cal S^{\cat E}$.
Moreover, the subcategory of filtered colimits of functors represented in cofibrant objects is also closed
under filtered colimits.
\end{lemma}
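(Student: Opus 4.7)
The plan is to invoke the adjunction $O\dashv P$ from Proposition~\ref{adjunction} and the fact that, by construction, the image of $P$ consists exactly of the filtered colimits of representables (since $P(X_\bullet)=\colim_{i\in I}R^{X_i}$ for $X_\bullet=\{X_i\}_{i\in I}$). So the statement of the lemma amounts to showing that the essential image of $P$ is closed under filtered colimits taken in $\cal S^{\cat E}$; the hint in the paper (namely $F=P(O(F))$) makes this explicit.

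Concretely, I would start with a filtered diagram $F_\bullet\colon J\to \cal S^{\cat E}$ such that $F_j\cong P(X^{(j)}_\bullet)$ for each $j\in J$, and let $F=\colim_{j\in J}F_j$. First, I would form the candidate pro-object $X_\bullet := \colim_{j\in J}X^{(j)}_\bullet$ in $(\proE)^{\op}$ (equivalently, the cofiltered limit in $\proE$), which is constructed by amalgamating the indexing categories of the individual pro-objects $X^{(j)}_\bullet$ via a Grothendieck-type construction. Second, I would verify $F\cong P(X_\bullet)$ by pointwise evaluation at $E\in\cat E$: on the one hand, $F(E)=\colim_{j}\colim_{i\in I_j}\hom_{\cat E}(X^{(j)}_i,E)$, and on the other, the formula $\hom_{\proE}(\{X_i\}_{i\in I},E)=\colim_i\hom_{\cat E}(X_i,E)$ together with the explicit form of the indexing of $X_\bullet$ shows that $P(X_\bullet)(E)$ reduces to the same iterated colimit. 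Naturality of this identification assembles into a natural isomorphism $F\cong P(X_\bullet)$, so $F$ is itself a filtered colimit of representable functors. The maps between pro-objects assemble correctly because of the universal property of the colimit $X_\bullet$ in $(\proE)^{\op}$.

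For the second (cofibrant) statement, the key observation is that the cofiltered-limit construction in $\proE$ only uses the objects of the source pro-objects as entries (it merely reindexes them), so if each $X^{(j)}_\bullet$ has all cofibrant entries, then the entries of $X_\bullet$ are cofibrant as well. The main conceptual obstacle is the one step that ``$\hom_{\proE}(-,E)$ turns cofiltered limits of pro-objects into filtered colimits on the nose,'' which is standard folklore for pro-categories but deserves a careful sentence: it follows from the explicit description of cofiltered limits in $\proE$ together with the formal-colimit definition $\hom_{\proE}(\{X_i\},E)=\colim_i\hom_{\cat E}(X_i,E)$. Once this compatibility is in place, the rest of the argument is bookkeeping.
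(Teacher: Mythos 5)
Your strategy coincides with the paper's: the lemma is exactly the assertion that the essential image of $P$ (equivalently, the class of pro-representable functors) is closed under filtered colimits; the paper frames this as $F=P(O(F))$ and defers the verification to the spectral case \cite[Lemma~5.1]{Chorny-ClassHomFun}, and your pointwise identification of both sides with the iterated colimit $\colim_{j}\colim_{i\in I_j}\hom_{\cat E}(X^{(j)}_i,E)$ is the content of that verification. Your remark that the amalgamated pro-object has the same entries as the inputs, so that cofibrancy is inherited, correctly disposes of the second assertion.

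The one step you should not wave at is the construction of $X_\bullet$ itself. The diagram $j\mapsto X^{(j)}_\bullet$ in $(\proE)^{\op}$ (obtained from $j\mapsto F_j$ via full faithfulness of $P$) consists of pro-morphisms, which are not given levelwise, so the Grothendieck construction on the indexing categories cannot even be written down until the whole cofiltered diagram has been reindexed to a strict (level) diagram; and appealing to ``the universal property of the colimit $X_\bullet$'' is circular, since the existence of that colimit in $(\proE)^{\op}$ is precisely what the amalgamation is supposed to produce. This is repaired by a citation rather than a new idea: either invoke the standard reindexing theorem asserting that pro-categories admit all cofiltered limits, computed after strictifying the diagram (strictification keeps the entries among the $X^{(j)}_i$, so cofibrancy survives), or bypass reindexing altogether by characterizing filtered colimits of representables through filteredness of the category of elements and checking that this property is stable under filtered colimits. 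With such a reference in place your argument is complete and is essentially the one the paper intends.
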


The proof is identical to the proof of the same lemma for spectral functors \cite[Lemma~5.1]{Chorny-ClassHomFun}.

In the next lemma we show that any representable functor $R^{X}$ smashed with a finite space $A$ is $\cal F$-equivalent to a representable functor again, cf. \cite[Lemma~3.1]{Chorny-BrownRep}.

\begin{lemma}\label{dual-lemma}
Let $A\in \cal S$ be a finite space and $X$ is a fibrant object of $\cat E$. Then the functor $A\wedge R^{X}$ is $\cal F$-equivalent to $R^{\hom(A,X)}$. 
\end{lemma}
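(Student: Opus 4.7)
The plan is to construct an explicit natural transformation $\phi\colon A\wedge R^X \to R^{\hom(A,X)}$ and verify it is an $\cal F$-equivalence by testing against $\cal F$-local functors. I would define $\phi_{Y}$ as the adjoint of the map obtained by applying $\hom_{\cat E}(-,Y)$ to the counit $\hom(A,X)\otimes A\to X$ of the tensor/cotensor adjunction on $\cat E$; informally, $a\wedge f \mapsto f\circ \mathrm{ev}_{a}$, where $\mathrm{ev}_{a}\colon \hom(A,X)\to X$ is evaluation at $a\in A$.

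Since $R^{\hom(A,X)}$ is already $\cal F$-local --- it is representable on the fibrant object $\hom(A,X)$, hence linear --- and both $R^{X}$ and $A\wedge R^X$ are cofibrant in the fibrant-projective structure (representables on fibrant objects are cofibrant, and the finite simplicial set $A$ is cofibrant), to conclude that $\phi$ is an $\cal F$-equivalence it suffices to show that for every $\cal F$-local $W\in \cal S^{\cat E}$ the induced map
\[
\phi^{\ast}\colon \hom_{\cal S^{\cat E}}(R^{\hom(A,X)},W)\longrightarrow \hom_{\cal S^{\cat E}}(A\wedge R^{X},W)
\]
is a weak equivalence. The enriched Yoneda lemma identifies $\phi^{\ast}$ with the canonical comparison
\[
W(\hom(A,X))\;\longrightarrow\; \hom_{\cal S}(A, W(X)),
\]
reducing the lemma to showing this last map is a weak equivalence for every fibrant linear $W$.

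I would prove this by induction on the cells of the finite pointed simplicial set $A$. The base case $A=\ast$ is immediate since both sides are the terminal object (using that $W$ is reduced). For the inductive step, present $A$ as a pointed homotopy pushout $A'\cup^{h}_{S^{n-1}}D^{n}$. Applying $\hom(-,X)$ turns this pushout into a homotopy pullback square in $\cat E$; by stability of $\cat E$ the same square is also a homotopy pushout, so linearity of $W$ sends it to a homotopy pullback in $\cal S$. The inductive hypothesis identifies the three values of $W$ with $\hom_{\cal S}(A',W(X))$, $\hom_{\cal S}(D^{n},W(X))$ and $\hom_{\cal S}(S^{n-1},W(X))$, and their homotopy pullback is $\hom_{\cal S}(A,W(X))$ because $\hom_{\cal S}(-,W(X))$ sends homotopy pushouts of pointed spaces to homotopy pullbacks.

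The step requiring the most care is the enriched--Yoneda identification of the derived mapping spaces in the fibrant-projective model structure, which depends on the fibrancy of $X$ (and of $\hom(A,X)$, inherited from the cofibrancy of $A$); beyond that the argument is a formal combination of linearity of $W$ with stability of $\cat E$.
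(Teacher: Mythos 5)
Your plan is correct and, at bottom, runs the same induction as the paper: both arguments reduce a general finite $A$ to the cell attachments $\partial\Delta^{n}_{+}\cofib \Delta^{n}_{+}$, and both rest on the single fact that applying the cotensor $\hom(-,X)$ to a homotopy pushout of finite pointed simplicial sets yields a homotopy pullback in $\cat E$. The difference is one of presentation. The paper works on the cofibrant side: it exhibits $A\wedge R^{X}$ as an iterated homotopy pushout of functors $B\wedge R^{X}$ with $B$ a simplex or its boundary, and observes that the comparison map from the resulting homotopy pushout of representables to $R^{\hom(A,X)}$ is literally an element of the localizing class $\cal F$. You instead test the comparison map against all $\cal F$-local $W$ and reduce, via enriched Yoneda, to the assembly map $W(\hom(A,X))\to \hom_{\cal S}(A,W(X))$, which you kill by the same cell induction using linearity of $W$; the two arguments are Yoneda-dual, and the paper's version has only the minor advantage of producing the explicit representing objects that are reused in Theorem~\ref{classification}. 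Two points to tighten. First, your parenthetical claim that $R^{\hom(A,X)}$ is itself $\cal F$-local is neither needed nor automatic (fibrancy of $R^{Y}$ in the fibrant-projective structure is not guaranteed by fibrancy of $Y$); what your reduction actually uses is only that both $A\wedge R^{X}$ and $R^{\hom(A,X)}$ are cofibrant, which holds because $X$ and $\hom(A,X)$ are fibrant. Second, the induction "on the cells of $A$" needs the cases $A=\Delta^{n}_{+}$ and $A=\partial\Delta^{n}_{+}$ as separately established inputs, since these corners of the attaching pushout need not have fewer nondegenerate simplices than $A$ itself; the first follows from $\Delta^{n}_{+}\we S^{0}$ together with the fact that linear functors preserve weak equivalences between fibrant objects, and the second by its own induction on $n$, exactly as in the paper's proof.
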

\begin{proof}
If $A=\partial \Delta^{n}_{+}$, then the proof proceeds by induction on $n$, cf. \cite[Lemma~3.1]{Chorny-BrownRep}. For $n=0$ the statement is trivial, since $\partial \Delta^0_+ = \ast$ and $\ast \wedge R^X = R^{\hom(\ast, X)} = R^0 = \ast$ this is a constant functor assuming the value $\ast$ in every object. 

Suppose for induction that $\partial \Delta^{n-1}_+\wedge R^{X} \overset  {\cal F} \simeq R^{\hom(\partial \Delta^{n-1}_+, X)}$, where $\overset  {\cal F} \simeq$ means $\cal F$-equivalent. Since for a fibrant $X\in \cat E$ the functor $R^X$ is cofibrant in the fibrant-projective model structure, the natural map $\Delta^{n-1}_+\wedge R^X \to R^{\hom(\Delta^{n-1}_+, X)}$ is an $\cal F$-equivalence, for the domain of this map is weakly equivalent to $R^X$ and the weak equivalence $\hom(\Delta^{n-1}_+,X)\we \hom(S^0, X) = X$ induces the $\cal F$-equivalence $R^{\hom(\Delta^{n-1}_+, X)}\Feq R^X$ coming from the homotopy pullback square
\[
\xymatrix{
\hom(\Delta^{n-1}_+, X)
\ar[r]^<<<{\dir{~}}
\ar[d]_{\dir{~}} & X
			\ar@{=}[d]\\
X
\ar@{=}[r]	& X.
}
\]
Therefore
\begin{multline*}
\partial \Delta^n_+ \wedge R^X \simeq \colim\left(
\vcenter{
\xymatrix{
\partial \Delta^{n-1}_+\wedge R^{X}
\ar@{^(->}[d]
\ar@{^(->}[r] 				& \Delta^{n-1}_+\wedge R^{X}
								\\
\Delta^{n-1}_+\wedge R^{X}
 					& 
}
}
\right) \Feq\\
\hocolim\left(
\vcenter{
\xymatrix{
R^{\hom(\partial \Delta^{n-1}_+, X)}
\ar[d]
\ar[r] 				&  R^{\hom(\Delta^{n-1}_+,X)}
								\\
 R^{\hom(\Delta^{n-1}_+,X)}
}
}
\right) \xrightarrow{\Feq}
R^{\hom({\Delta^{n-1}_+} \coprod_{\partial \Delta^{n-1}_+} {\Delta^{n-1}_+}, X
						)}\\
						\simeq R^{X^{\partial \Delta_+^n}}.
\end{multline*}
In general, a finite space $A$ is obtained by a finite sequence $A_0\to A_1\to \ldots A_m$, where $A_0= \ast$ and $A_m=A$, and
\[
\vcenter{
\xymatrix{
\partial \Delta^{n_i}_+
\ar@{^(->}[d]
\ar[r]			  & A_i
					\ar@{^(->}[d]\\
\Delta^{n_i}_+
\ar[r]			  & A_{i+1}
}
}
\]
is a (homotopy) pushout for all $0\leq i\leq m$.

We finish the proof by induction on $i$, for $0\leq i\leq m$. Again, for $i=0$ the statement is trivial, and assuming the statement for $i$, we conclude that it is also true for $i+1$ since 
\[
\hom \left(
\vcenter{
\xymatrix{
\partial \Delta^{n_i}_+
\ar@{^(->}[d]
\ar[r]			  & A_i
					\ar@{^(->}[d]\\
\Delta^{n_i}_+
\ar[r]			  & A_{i+1}
}
}, X \right)
\]
is a homotopy pullback square in $\cat E$, and hence
\begin{multline*}
A_{i+1}\wedge R^X \simeq \colim \left(
\vcenter{
\xymatrix{
\partial \Delta^{n_i}_+\wedge R^{X}
\ar@{^(->}[d]
\ar@{^(->}[r] 				& A_i\wedge R^{X}
								\\
\Delta^{n_i}_+\wedge R^{X}
}
}
\right) \Feq\\
\hocolim
\left(
\vcenter{
\xymatrix{
 R^{\hom(\partial \Delta^{n_i}_+,X)}
\ar@{^(->}[d]
\ar@{^(->}[r] 				& R^{\hom(A_i,X)}
								\\
 R^{\hom(\Delta^{n_i}_+,X)}
}
}
\right) \Feq R^{\hom(A_{i+1}, X)}
\end{multline*}
\end{proof}

\begin{theorem}\label{classification}\label{F-euiv-filtered}
Let $F\in \cal S^{\cat E}$ be a linear functor. Then there exists a
filtered diagram $J$ and a functor $G=\colim_{j\in J}R^{X_j}$ with
cofibrant $X_j\in \cat E$ for all $j\in J$ and a weak equivalence
$f\colon \tilde F\to G$ for some cellular approximation $\tilde
F\we F$ in the fibrant-projective model structure.
\end{theorem}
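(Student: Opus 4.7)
The plan is to replace $F$ by a fibrant-projective cellular approximation $\tilde F$, write $\tilde F$ as a filtered colimit of its finite sub-cell-complexes, identify each of them (up to $\cal F$-equivalence) with a single representable on a cofibrant object via Lemma~\ref{dual-lemma} and stability of $\cat E$, and finally promote the resulting $\cal F$-equivalence $\tilde F \to G := \colim_{\alpha \in J} R^{X_\alpha}$ to a fibrant-projective weak equivalence by a Whitehead argument. Without loss of generality we may assume $F$ is already fibrant, since linearity is invariant under fibrant-projective weak equivalences.

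First, take a fibrant-projective cofibrant replacement $\tilde F \cofib F$. By construction $\tilde F$ is a transfinite composition of pushouts of generating cofibrations $R^A \otimes (\partial\Delta^n_+ \hookrightarrow \Delta^n_+)$ with $A \in \cat E$ fibrant, so $\tilde F = \colim_{\alpha \in J} \tilde F_\alpha$ over the filtered poset $J$ of finite sub-cell-complexes. I will construct, by induction on the number of cells of $\tilde F_\alpha$, cofibrant objects $X_\alpha \in \cat E$ together with compatible $\cal F$-equivalences $\tilde F_\alpha \Feq R^{X_\alpha}$. For the inductive step, attaching a cell $R^A \otimes (\partial\Delta^n_+ \hookrightarrow \Delta^n_+)$ to $\tilde F_\alpha$ yields, by Lemma~\ref{dual-lemma}, the homotopy pushout $\hocolim(R^{X_\alpha} \leftarrow R^{\hom(\partial\Delta^n_+, A)} \to R^A)$; since $\cat E$ is stable, the class $\cal F$ identifies this with a representable $R^{X_{\alpha+1}}$, where $X_{\alpha+1} = X_\alpha \times^h_{\hom(\partial\Delta^n_+, A)} A$ is the homotopy fiber product in $\cat E$. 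Setting $G := \colim_{\alpha \in J} R^{X_\alpha}$, which is a small functor of the required form by Lemma~\ref{filtered-colimit}, the compatible $\cal F$-equivalences assemble into an $\cal F$-equivalence $\tilde F \to G$.

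To upgrade this to a fibrant-projective weak equivalence, note that $G$ is fibrant in $\cal S^{\cat E}_{\text{fp}}$ (a filtered colimit of fibrant representables on cofibrant objects) and linear (each $R^{X_\alpha}$ is linear by stability of $\cat E$, and filtered colimits commute with finite homotopy limits in $\cal S$), hence $\cal F$-local. Applying $Q$ to $f\colon \tilde F \to G$ produces a weak equivalence $Qf$ by Proposition~\ref{Q-equiv=F-equiv}. The naturality squares for $\eta \colon \Id \to Q$ combined with the Whitehead theorem for $\cal F$-local objects \cite[3.2.13]{Hirschhorn} first force $\eta_F$, and then by 2-out-of-3 $\eta_{\tilde F}$ and $\eta_G$, to be weak equivalences; a final application of 2-out-of-3 in the naturality square for $f$ yields the desired weak equivalence $\tilde F \to G$.

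The principal obstacle is ensuring that the assignment $\alpha \mapsto X_\alpha$ and the $\cal F$-equivalences $\tilde F_\alpha \Feq R^{X_\alpha}$ are strictly functorial in $\alpha \in J$: each successor-step homotopy fiber product in $\cat E$ is a priori determined only up to weak equivalence, so compatibility across the inclusions $\tilde F_\alpha \hookrightarrow \tilde F_\beta$ must be secured by fixing once and for all a functorial model for homotopy fiber products in $\cat E$, along the lines of the analogous construction in \cite[Theorem~5.2]{Chorny-ClassHomFun} for spectral functors.
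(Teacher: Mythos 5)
Your high-level strategy is the same as the paper's (cellular approximation, Lemma~\ref{dual-lemma} plus stability to trade finite cell attachments for homotopy pullbacks in $\cat E$, assembly via Lemma~\ref{filtered-colimit}, and a final local-Whitehead upgrade), but two steps that carry essentially all of the technical weight are asserted rather than proved.

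First, the decomposition $\tilde F=\colim_{\alpha\in J}\tilde F_\alpha$ over the poset of \emph{finite} sub-cell-complexes is exactly the kind of statement that needs an argument in this setting; it is the reason the fat small object argument \cite{fat} exists. The paper does not use finite subcomplexes: it invokes \cite[Lemma~3.3]{Chorny-BrownRep} to reorganize the transfinite cellular presentation into a countable telescope $F'_0\to F'_1\to\cdots$ in which each stage attaches a (possibly infinite) coproduct of cells whose attaching maps factor through the previous stage, and then writes each stage as a filtered colimit of pushouts attaching only \emph{finitely many} cells to all of $F'_{k-1}$. Accordingly the inductive hypothesis there is ``$F'_{k-1}$ is $\cal F$-equivalent to a filtered colimit of representables,'' not to a single representable. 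Second, and more seriously, your inductive step assumes away its content. The $\cal F$-equivalences of Lemma~\ref{dual-lemma} are zigzags passing through objects that are not $\cal F$-local, so you cannot simply transport the attaching map $R^A\otimes\partial\Delta^n_+\to\tilde F_\alpha$ along $\tilde F_\alpha\Feq R^{X_\alpha}$ to a map $X_\alpha\to A^{\partial\Delta^n_+}$ in $\cat E$ and then take ``the'' homotopy fiber product; moreover $A^{\partial\Delta^n_+}$ need not be cofibrant even when $A$ is fibrant and cofibrant, so your $X_{\alpha+1}$ is not visibly cofibrant. The paper's diagram~(\ref{dash-arrow}) is precisely the missing construction: the comparison maps $\psi_{k-1,j}$ into $\colim_l R^{Y_l}$ are produced from the unit of the adjunction of Proposition~\ref{adjunction}, lifted through a levelwise cofibrant replacement in $\proE$ (which is also what makes the representing objects cofibrant), and reindexed over a final subcategory so as to become an honest natural transformation of diagrams; only then can one form the pullbacks $Y'_l$ and pass to filtered colimits. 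Fixing a functorial model for homotopy fiber products, which you name as the principal obstacle, does not address the prior problem of producing strictly commuting comparison maps at all. (Your closing step upgrading the $\cal F$-equivalence to a fibrant-projective weak equivalence, using that $G$ is fibrant and linear, hence $\cal F$-local, together with Proposition~\ref{Q-equiv=F-equiv} and the $\cal F$-local Whitehead theorem, is correct and in fact makes explicit a point the paper leaves terse.)
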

\begin{proof}
Since $F$ is a linear functor, it is also a homotopy functor, hence, by \cite[Proposition~5.8]{Duality}, there exists a cellular approximation $\tilde F\we F$ such that for some cardinal $\lambda$ there is a transfinite sequence
of functors $\tilde F = \colim_{i\leq \lambda}\tilde F_{i}$, and $\tilde F_{i}$
is obtained from $\tilde F_{i-1}$ by attaching a generating cofibration of the form $A\wedge R^{X}\cofib B\wedge R^{X}$ for every successor cardinal $i\leq \lambda$ and $\tilde F_{i}=\colim_{a<i}\tilde F_{a}$ for every limit ordinal $i\leq \lambda$. The cofibration $A\cofib B$ is a generating cofibration in $\cal S$ and therefore $A$ and $B$ are finite simplicial sets. Moreover, the representing object $X\in \cat E$ may be chosen to be fibrant and cofibrant, since $F$ is a homotopy functor.

By \cite[Lemma~3.3]{Chorny-BrownRep}, there exists a countable
sequence $\{F_{k}'\}_{k<\omega}$ such that $F'_{0}=\ast$,
$\tilde F=\colim_{k<\omega}F_{k}'$ and for each $k>0$ there is a pushout
square
\[
\xymatrix{
\displaystyle{\bigvee_{s\in S_{k-1}} A_{s}\wedge R^{ X_{s}} }
\ar[r]
\ar@{^{(}->}[d]         &F'_{k-1}
                    \ar[d]\\
\displaystyle{\bigvee_{s\in S_{k-1}} B_{s}\wedge R^{ X_s}}
\ar[r]           &F'_{k},\\
}
\]
where the coproduct is indexed by the subset $S_{k-1}\subset \lambda$ corresponding to the cells coming from various stages of the original sequence $\{\tilde F_{i}\}_{i\leq \lambda}$, such that their attachment maps factor through the $(k-1)$-st stage of the previously constructed sequence.

The coproduct of maps in the commutative square above is a filtered colimit of finite coproducts over the filtering $J_{k-1}$ of the finite subsets of $S_{k-1}$. Let us think of the constant object $F'_{k-1}$ as a filtered colimit of the constant diagrams
over the same filtering $J_{k-1}$. However, colimits over
$J_{k-1}$ commute with pushouts, and hence we obtain the
representation of $F'_{k}$ as a filtered colimit of pushouts of
the following form
\begin{equation}\label{pushout-square}
\xymatrix{
\displaystyle{\bigvee_{s\in S_{k-1,j}} A_{s}\wedge R^{ X_{s}} }
\ar[r]_<<<<{\varphi_{k-1,j}}
\ar@{^{(}->}[d]         &F'_{k-1}
                    \ar[d]\\
\displaystyle{\bigvee_{s\in S_{k-1,j}} B_{s}\wedge R^{ X_s}}
\ar[r]           &F_{k,j},\\
}
\end{equation}
where $S_{k-1,j}\subset S_{k-1}$ is a finite subset corresponding to the element $j\in J_{k-1}$.

Now, by Lemma~\ref{dual-lemma}, there are $\cal F$-equivalences in
the fibrant projective model category: $A_{s}\wedge R^{{X}_{s}}\Feq R^{\hom(A_s,{X}_{s})}$ and $B_{s}\wedge R^{{X}_{s}}\Feq R^{\hom(B_s,{X}_{s})}$. 

Moreover, any finite coproduct of representable functors represented in fibrant objects is $\cal F$-equivalent to a representable functor by an inductive argument on the number of terms that begins by observing that a coproduct of two representable functors $R^{U}\vee R^{V}$ is $\cal F$-equivalent to $R^{ U\times V}$, since the map $R^{ U}\vee R^{ V}\simeq\hocolim(R^{U}\leftarrow R^{0}\to R^{V})\longrightarrow R^{ U\times V}$ is an element in $\cal F$ corresponding  to the homotopy pullback square
\[
\xymatrix{
 U\times V
\ar[r]
\ar[d]                     &  U
                                 \ar@{->>}[d]\\
 V
\ar@{->>}[r]                    &  0.\\
}
\]

In other words, the entries on the left side of the pushout square (\ref{pushout-square})
are $\cal F$-equivalent to representable functors represented in fibrant objects of \cat E.

Suppose for induction that there is an $\cal F$-equivalence $F'_{k-1}\to \colim_{l\in L_{k-1}}R^{Y_l}$, where $L_{k-1}$ is a filtered category and the representing objects $Y_{l}\in \cat E$ are fibrant and cofibrant. Then we obtain a morphism of the pushout diagram (\ref{pushout-square}) into a commutative square that is also a homotopy pushout composed of filtered colimits of representable functors constructed as follows
\begin{equation}\label{dash-arrow}
\xymatrix{
R^{\left(\displaystyle{\prod_{s\in S_{k-1,j}}}\hom(A_s, {X_s})\right)_{\text{cof}}}
\ar[rrr]_{\psi_{k-1,j}}
\ar[ddd]        &  &  & \displaystyle{\colim_{l\in L'_{k-1}}} R^{Y_l}
                        \ar[ddd]\\
 & \displaystyle{\bigvee_{s\in S_{k-1,j}} A_{s}\wedge R^{X_{s}}}
\ar@{^{(}->}[d]
\ar[r]_<<<<{\varphi_{k-1,j}}
\ar[ul]             &F'_{k-1}
                    \ar[d]
                    \ar[ur]\\
 & \displaystyle{\bigvee_{s\in S_{k-1,j}} B_{s}\wedge R^{X_s}}
    \ar[r]
    \ar[dl]          & F_{k,j}
                        \ar@{-->}[dr]\\
R^{\left(\displaystyle{\prod_{s\in S_{k-1,j}}}\hom(B_s, {X_s})\right)_{\text{cof}}}
\ar[rrr]        &  &  & \displaystyle{\colim_{l\in L'_{k-1}}} R^{Y'_l}.\\
}
\end{equation}
The diagonal maps on the left are obtained as compositions of the unit of the adjunction (\ref{adjunction})
with a map induced by the cofibrant approximations in pro-$\cat E$:
\begin{eqnarray*}
\left(\displaystyle{\prod_{s\in S_{k-1,j}}}\hom(A_s,{X_s})\right)_{\text{cof}} \trivfibr \displaystyle{\prod_{s\in S_{k-1,j}}}\hom(A_s, {X_s}),\\
\left(\displaystyle{\prod_{s\in S_{k-1,j}}}\hom(B_s, {X_s})\right)_{\text{cof}} \trivfibr \displaystyle{\prod_{s\in S_{k-1,j}}}\hom(B_s, {X_s}).
\end{eqnarray*}

The universal property of the unit of adjunction guarantees the existence of a natural map
\[
R^{\displaystyle{\prod_{s\in S_{k-1,j}}
					\hom(A_s, X_s)
			    }
    } 
\longrightarrow \colim_{l\in L_{k-1}}R^{Y_l}.
\]
The corresponding map in the pro-category has a lift to the cofibrant replacement of the constant pro-spectrum,
since the pro-spectrum $\{Y_l\}_{l\in L_{k-1}}$ is (levelwise) cofibrant.
\[
\xymatrix{
 & \left(\displaystyle{\prod_{s\in S_{k-1,j}}}\hom(A_s,{X_s})\right)_{\text{cof}}
    \ar@{->>}[d]^{\dir{~}} \\
 \{Y_l\}_{l\in L_{k-1}}
 \ar[r]
 \ar@{-->}[ur] & \displaystyle{\prod_{s\in S_{k-1,j}}}\hom(A_s,{X_s})
}
\]
The source of the dashed map in the diagram above may be replaced
by an isomorphic pro-object $\{Y_l\}_{l\in L'_{k-1}}$ with a final
indexing subcategory $L'_{k-1}\subset L_{k-1}$, so that the
resulting map is reindexed into a natural transformation of
contravariant $L'_{k-1}$-diagrams with a constant diagram in the
target. The induced map in the category of functors is denoted by
$\psi_{k-1,j}$, and it factors through every stage of the
colimit. Thus the outer pushout diagram in (\ref{dash-arrow}) may
be viewed as a filtered colimit of pushout diagrams indexed by
$L'_{k-1}$.

Let us put 
\[Y'_l = Y_l \times_{\left(\displaystyle{\prod_{s\in S_{k-1,j}}}\hom(A_s, {X_s})\right)_{\text{cof}}}\left(\displaystyle{\prod_{s\in S_{k-1,j}}}\hom(B_s, {X_s})\right)_{\text{cof}}.
\]
This is a homotopy pullback in \cat E, and hence $R^{Y'_l}$ is $\cal F$-equivalent to the
homotopy pushout of the corresponding representable functors in
the fibrant-projective model structure on the category of small
functors $\cal S^\cat E$.

Taking the filtered colimit of these commutative squares indexed
by $L'_{k-1}$, we obtain the outer square of (\ref{dash-arrow}),
and since filtered colimits preserve both $\cal F$-equivalences (by an argument simililar to \cite[Lemma~1.2]{CaCho} and the fact that generating cofibrations in $\cal S^{\cat E}$ have finitely presentable domains and codomains) and homotopy pushouts, we conclude that $\displaystyle{\colim_{l\in L'_{k-1}}} R^{Y'_l}$ is $\cal F$-equivalent to the homotopy pushout of the outer square of (\ref{dash-arrow}). Therefore the dashed arrow in (\ref{dash-arrow}) is an $\cal
F$-equivalence. In other words, $F_{k,j}$ is $\cal F$-equivalent
to a filtered colimit of representable functors.

Therefore $F'_{k}=\colim_{j\in J_{k-1}} F_{k,j}$ is a filtered colimit of functors $\cal F$-equivalent
to filtered colimits of representable functors, which, in turn, is $\cal F$-equivalent to a filtered colimit of representable functors by Lemma~\ref{filtered-colimit}.

Finally, $F=\colim_{k<\omega}F'_{k}$ is a countable sequential colimit of filtered colimits
of functors $\cal F$-equivalent to representable functors, which may be reindexed into a single filtered colimit
of functors $\cal F$-equivalent to representable functors by Lemma~\ref{filtered-colimit}.
\end{proof}

\section{The classification presented as a Quillen equivalence}\label{section-Quillen-equivalence}

The localization of the fibrant-projective model structure constructed in Theorem~\ref{Bousfield-localization} applies only for simplicial combinatorial model categories \cat E, such that all objects in \cat E are cofibrant. On this assumption we prove a refinement of the above classification of linear functors presenting it as a Quillen equivalence with the opposite of the category of pro-objects in \cat E.

\begin{theorem}\label{main-theorem}
Let \cat E be a simplicial combinatorial stable model category. Suppose that all objects in \cat E are cofibrant, then the Quillen adjunction
$\xymatrix{O\colon \cal S^{\cat E}
                \ar@/_/[r]  &
                                (\text{\emph{pro-}}\cat E)^{\op} \!: P
                                \ar@/_/[l]}$
is a Quillen equivalence if  $\,\cal S^{\cat E}$ is equipped with the
linear model structure and \emph{pro-}\cat E is equipped with the strict
model structure.
\end{theorem}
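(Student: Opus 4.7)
The plan is to combine the earlier machinery: the Quillen pair of Proposition~\ref{Quillen-map}, the linear model structure of Theorem~\ref{Bousfield-localization}, the classification Theorem~\ref{classification}, and the Appendix lemma giving sufficient conditions for a Quillen adjunction with a fully faithful right adjoint to be a Quillen equivalence. The strategy splits into two stages: first, promote the Quillen pair from the fibrant-projective model structure to the linear model structure on $\cal S^{\cat E}$; second, invoke the Appendix lemma together with the classification theorem to upgrade this to a Quillen equivalence.

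For the first stage, left Bousfield localization leaves cofibrations unchanged, so $O$ still preserves cofibrations by Proposition~\ref{Quillen-map}. By the standard criterion, it is enough to check that $P$ sends fibrant objects of $(\proE)^{\op}$ to fibrant linear functors. A fibrant object of $(\proE)^{\op}$ is a cofibrant object of $\proE$, which under the hypothesis that every object of $\cat E$ is cofibrant amounts to any pro-object $X_\bullet = \{X_i\}$. Each $R^{X_i}$ is then reduced, levelwise fibrant (since $\hom_{\cat E}(X_i, Y)$ is a fibrant simplicial set whenever $X_i$ is cofibrant and $Y$ is fibrant), and linear (since $\hom_{\cat E}(X_i, -)$ is right Quillen when $X_i$ is cofibrant, hence preserves homotopy pullbacks between fibrant objects); since filtered colimits in $\cal S$ commute with finite homotopy limits and preserve levelwise fibrancy, $P X_\bullet = \colim_i R^{X_i}$ is fibrant in the linear model structure. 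Equivalently, one may verify directly that $O$ sends the maps of $\cal F$ to weak equivalences in $(\proE)^{\op}$: a map $\hocolim(R^B \leftarrow R^D \to R^C) \to R^A$ arising from a homotopy pushout with corners $A, B, C, D$ is sent by $O$ (using $O R^X = X$) to the natural comparison $A \to B \times^h_D C$ in $\proE$, which is a weak equivalence precisely because $\cat E$ is stable, so the homotopy pushout coincides with the homotopy pullback.

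For the second stage, the right adjoint $P$ is $1$-categorically fully faithful: the computation
\[
\hom_{\cal S^{\cat E}}(P X_\bullet, P Y_\bullet) \;=\; \lim_i \colim_j \hom_{\cat E}(Y_j, X_i) \;=\; \hom_{(\proE)^{\op}}(X_\bullet, Y_\bullet),
\]
which uses Yoneda together with the fact that evaluation at $X_i$ (represented by $R^{X_i}$) commutes with filtered colimits, is essentially already contained in the proof of Proposition~\ref{adjunction}. Thus $(O, P)$ falls within the scope of the Appendix lemma, and the remaining hypothesis to verify is that every fibrant object of $\cal S^{\cat E}$ in the linear model structure is weakly equivalent to some $P X_\bullet$. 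This is exactly Theorem~\ref{classification}, which for any linear functor $F$ produces a zig-zag
\[
F \;\overset{\sim}{\longleftarrow}\; \tilde F \;\overset{\sim}{\longrightarrow}\; \colim_{j \in J} R^{X_j} \;=\; P\{X_j\}_{j \in J}
\]
of fibrant-projective (hence linear) weak equivalences, with each $X_j \in \cat E$ cofibrant. The Appendix lemma then delivers the Quillen equivalence.

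The main obstacle has already been surmounted upstream: it is the derived essential surjectivity of $P$, namely Theorem~\ref{classification}. Beyond that, the only subtle point is the essential use of stability of $\cat E$ in Step~1, which is what converts the homotopy pushouts defining $\cal F$ into the homotopy pullbacks that constitute weak equivalences in $\proE$ with the strict model structure.
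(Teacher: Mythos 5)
Your Stage~1 (descending the Quillen pair of Proposition~\ref{Quillen-map} to the linear localization) is fine, and in fact more explicit than the paper, which passes over this point. The problem is Stage~2: you have misread the hypotheses of Lemma~\ref{reflection-Quillen-equiv}. That lemma does \emph{not} ask for derived essential surjectivity of $P$; it asks for (1) the derived unit $F\to P\widehat{OF}$ to be a weak equivalence for cofibrant $F$, and (2) for every fibrant $X_\bullet$ and every cofibrant replacement $p\colon \widetilde{PX_\bullet}\trivfibr PX_\bullet$, the map $Op$ to be a weak equivalence. You verify neither. One-categorical full faithfulness of $P$ only gives that the counit $OP\to \Id$ is an isomorphism; it says nothing about the \emph{derived} counit, which is exactly condition (2). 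This condition has genuine content here because $PX_\bullet=\colim_i R^{X_i}$ is generally \emph{not} cofibrant in the fibrant-projective (hence linear) model structure --- the generating cofibrations involve representables at \emph{fibrant} objects of $\cat E$, while the $X_i$ are only cofibrant --- so one cannot invoke ``left Quillen functors preserve weak equivalences between cofibrant objects.'' The paper's Lemma~\ref{derived-counit} handles this by exhibiting a specific cofibrant approximation $\hocolim_i R^{\hat X_i}\to\colim_i R^{X_i}$ (with $\hat X_i$ fibrant replacements), showing it is a weak equivalence because filtered colimits are homotopy colimits in the class-cofibrantly generated fibrant-projective structure, and then transporting the computation to $(\proE)^{\op}$, where the strict structure is class-fibrantly generated. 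None of this is formal.

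Condition (1) is also not a formal consequence of Theorem~\ref{classification} plus full faithfulness. The paper's Lemma~\ref{derived-unit} proves it by testing $u_F$ against $\cal F$-local objects $W$, using Theorem~\ref{classification} to replace $W$ by some $PX_\bullet$ with $X_\bullet$ cofibrant, passing through the adjunction, and then invoking Lemma~\ref{derived-counit} again to identify $O\widetilde{P\widehat{OF}}$ with $\widehat{OF}$. So the classification theorem is indeed the key input, as you say, but it enters as one step in the verification of the derived-unit condition, not as a substitute for the two conditions of the Appendix lemma. To repair your argument you need to supply the analogue of Lemma~\ref{derived-counit} and then run the adjunction argument for the derived unit; ``fully faithful plus essentially surjective'' is only a valid conclusion scheme once both are established at the derived level.
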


The following lemmas precede the proof of this theorem.

\begin{lemma}\label{derived-counit}
Let $X_\bullet\in \proE$ be a cofibrant object. Then,
$PX_\bullet\in \cal S^{\cat E}$ is a filtered colimit of representable
functors and not necessarily cofibrant. Consider a cofibrant
replacement, $p\colon \widetilde{PX_\bullet} \trivfibr
PX_\bullet$. Then the left adjoint $O$ preserves this weak
equivalence: $Op\colon O\widetilde{PX_\bullet} \we OPX_\bullet$.
\end{lemma}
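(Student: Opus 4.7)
The plan is to compute both $OPX_\bullet$ and $O\widetilde{PX_\bullet}$ explicitly and exhibit $Op$ as a strict weak equivalence by directly comparing the resulting pro-objects. First, I would verify that the counit $\epsilon\colon OPX_\bullet \to X_\bullet$ in $(\proE)^{\op}$ is a natural isomorphism. Since $O$ preserves colimits (being a left adjoint) and $OR^Y = \mathrm{const}(Y)$ by the direct adjunction computation
\[
\hom_{(\proE)^{\op}}(OR^Y, Z_\bullet) = PZ_\bullet(Y) = \colim_j \hom_{\cat E}(Z_j, Y) = \hom_{(\proE)^{\op}}(\mathrm{const}(Y), Z_\bullet),
\]
one obtains $OPX_\bullet = \colim_i \mathrm{const}(X_i)$ in $(\proE)^{\op}$; in $\proE$ this becomes the cofiltered limit of constant pro-objects at the $X_i$, which by the defining property of $\proE$ recovers the original pro-object $X_\bullet$. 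Via the standard adjunction triangle, $\epsilon \circ Op$ is then the adjoint transpose $\tilde p\colon X_\bullet \to O\widetilde{PX_\bullet}$ of the cofibrant replacement $p$, so the lemma reduces to showing $\tilde p$ is an essentially levelwise weak equivalence in $\proE$.

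To compute $O\widetilde{PX_\bullet}$, I would apply Theorem \ref{classification} to $\widetilde{PX_\bullet}$. It is a cofibrant linear homotopy functor: linearity follows because $\widetilde{PX_\bullet}$ is fibrant-projectively equivalent to $PX_\bullet = \colim_i R^{X_i}$, each $R^{X_i}$ is linear (as $X_i$ is cofibrant and $\cat E$ is stable), and filtered colimits of linear functors are linear; the homotopy-functor property uses the standing assumption that all objects of $\cat E$ are cofibrant. The classification theorem then produces a fibrant-projective weak equivalence $\widetilde{PX_\bullet} \we \colim_k R^{Y_k}$ with each $Y_k$ cofibrant in $\cat E$ (the cellular-approximation step is vacuous since $\widetilde{PX_\bullet}$ is already cellular). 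Applying the colimit-preserving $O$ identifies $O\widetilde{PX_\bullet}$ with the pro-object $\{Y_k\}$ in $(\proE)^{\op}$.

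The hard part will be the final step: upgrading the fibrant-projective weak equivalence underlying $p$, which evaluates at each fibrant $E \in \cat E$ to a weak equivalence $\colim_k \hom(Y_k, E) \we \colim_i \hom(X_i, E)$, to an \emph{essentially levelwise} weak equivalence of pro-objects $\tilde p\colon \{X_i\} \to \{Y_k\}$ as demanded by the strict model structure. I expect this to follow by tracing the construction in the proof of Theorem \ref{classification}: each $R^{Y_k}$ is built from a cellular stage of $\widetilde{PX_\bullet}$ equipped with an explicit comparison map to a representable attached over the pro-system $\{X_i\}$ via the adjunction unit (see the dashed arrows in diagram (\ref{dash-arrow}) there), so the resulting reindexing function and connecting maps between $\{X_i\}$ and $\{Y_k\}$ arise naturally at the level of pro-objects and directly realize the required essentially levelwise equivalence.
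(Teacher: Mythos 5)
Your opening computation ($OR^Y=\mathrm{const}(Y)$, hence $OPX_\bullet\cong X_\bullet$ via the counit) is correct and consistent with $P$ being fully faithful, and reducing the lemma to showing that the transpose $X_\bullet\to O\widetilde{PX_\bullet}$ is a strict equivalence is fine. The genuine gap is in the middle step: ``Applying the colimit-preserving $O$ identifies $O\widetilde{PX_\bullet}$ with the pro-object $\{Y_k\}$.'' The classification theorem hands you a fibrant-projective weak equivalence $\widetilde{PX_\bullet}\we\colim_k R^{Y_k}$ whose \emph{target is not cofibrant} (it is again just $P$ of a pro-object, namely $P\{Y_k\}$). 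A left Quillen functor only preserves weak equivalences between cofibrant objects, so asserting that $O$ of this map is an equivalence is exactly an instance of the statement you are trying to prove, with $\{Y_k\}$ in place of $X_\bullet$. You have replaced one filtered colimit of representables by another without gaining anything. (There are secondary problems too: an arbitrary cofibrant replacement $\widetilde{PX_\bullet}$ need not be cellular, so Theorem~\ref{classification} does not apply to it verbatim; and the comparison maps in its proof are built from $\cal F$-equivalences, which $O$ has even less reason to preserve. The closing paragraph, which you yourself flag as the hard part, is only a statement of intent.)

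The paper's proof avoids all of this by never invoking the classification theorem. It first observes that, since $O$ preserves weak equivalences between cofibrant objects, it suffices to treat \emph{one} well-chosen cofibrant approximation of $PX_\bullet$, namely $q\colon\hocolim_i R^{\hat X_i}\to\colim_i R^{X_i}$ induced by fibrant replacements $X_i\trivcofib\hat X_i$ in $\cat E$; this source is cofibrant, and $q$ is a weak equivalence because filtered colimits are homotopy colimits in the class-cofibrantly generated fibrant-projective model structure. Then $Oq$ is computed explicitly as $\hocolim_i\hat X_i\to\colim_i X_i$ in $(\proE)^{\op}$, which is again a weak equivalence for the same reason (the strict structure on $\proE$ is class-fibrantly generated, so its opposite is class-cofibrantly generated). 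If you want to salvage your route, you would need to supply an argument of exactly this kind anyway, so you should adopt it directly rather than detour through Theorem~\ref{classification}.
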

\begin{proof}
As the left Quillen functor $O$ preserves weak equivalences
between cofibrant objects,  it suffices to prove that $O$ takes
into a weak equivalence some cofibrant approximation of $P\{X_i\}
= \colim_i R^{X_i}$. Consider the cofibrant approximation:
$q\colon \hocolim_i R^{\hat X_i} = \overline{PX_\bullet}\to
PX_\bullet$, where $q$ is induced by the fibrant-projective
cofibrant approximations $R^{\hat X_i}\we R^{X_i}$, while the maps
$X_i \trivcofib \hat X_i$ are the functorial fibrant
approximations in $\cat E$. $\overline{PX_\bullet}$ is cofibrant as a
homotopy colimit of a diagram with cofibrant entries (we assume
here that a homotopy colimit is defined as a coend with a
projectively cofibrant, contractible diagram of spaces, i.e. a
left Quillen functor preserving cofibrant objects).  By
\cite[2.3(ii)]{Dugger-generation}, the map $q$ is a weak equivalence,
since filtered colimits in the class-cofibrantly generated
fibrant-projective model structure on $\cal S^{\cat E}$ are homotopy
colimits.

The left adjoint $O$ preserves colimits and homotopy colimits as a left Quillen
functor, and hence the map $Oq\colon O\overline{PX_\bullet}\to
OPX_\bullet$ is essentially the map $Oq\colon \hocolim OR^{\hat
X_i}\to \colim OR^{X_i}$, or just $Oq\colon \hocolim \hat X_i \to
\colim X_i$ in the opposite of the strict model structure on
$\proE$. However, the strict model structure on $\proE$
is class-fibrantly generated, \cite{pro-spaces}, and therefore the
dual model structure is class-cofibrantly generated and the map
$Oq$ is a weak equivalence by \cite[2.3(ii)]{Dugger-generation}.
Therefore, $Op$ is also a weak equivalence.
\end{proof}

\begin{lemma}\label{derived-unit}
The derived unit map $u_F\colon F\to P\widehat{OF}$ is a $Q$-equivalence for all cofibrant $F\in \cal S^{\cat E}$.
\end{lemma}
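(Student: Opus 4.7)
The plan is to reduce the claim to Theorem~\ref{classification} via the adjunction $O\dashv P$ and the triangle identity. First I would form the linearization $\eta_F\colon F\to QF$ from Proposition~\ref{Q-construction}, which is a $Q$-equivalence by Proposition~\ref{Q-equiv=F-equiv}. Applying Theorem~\ref{classification} to the linear functor $QF$ yields a cellular approximation $\tilde{QF}\we QF$ in the fibrant-projective model structure and a fibrant-projective weak equivalence $f\colon \tilde{QF}\we G$, where $G=\colim_{j\in J}R^{X_j}=P\{X_j\}_{j\in J^{\op}}$ for a pro-object whose entries $X_j$ are all cofibrant in $\cat E$. Hence $\{X_j\}$ is (essentially levelwise) cofibrant in $\proE$, that is fibrant in $(\proE)^{\op}$, and $P\{X_j\}$ is a filtered colimit of representable functors in cofibrant objects, hence linear and $Q$-local.

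Second, using cofibrancy of $F$ I would lift $\eta_F$ through the trivial fibration $\tilde{QF}\we QF$ and postcompose with $f$ to obtain a $Q$-equivalence $\varphi\colon F\to P\{X_j\}$. By the adjunction, $\varphi$ corresponds to a transpose $\hat\varphi\colon OF\to \{X_j\}$ in $(\proE)^{\op}$, and the triangle identity gives $\varphi=P\hat\varphi\circ u_F$. Since $\{X_j\}$ is fibrant in $(\proE)^{\op}$ and $\iota\colon OF\to \widehat{OF}$ is a trivial cofibration in $(\proE)^{\op}$ (by definition of the fibrant approximation $\widehat{OF}$), the map $\hat\varphi$ lifts to some $\bar\varphi\colon \widehat{OF}\to \{X_j\}$ with $\bar\varphi\circ\iota=\hat\varphi$. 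Applying $P$, we obtain the factorisation through the derived unit:
\[
F\xrightarrow{u_F} POF \xrightarrow{P\iota} P\widehat{OF} \xrightarrow{P\bar\varphi} P\{X_j\},
\]
whose total composite is $\varphi$. Since $\varphi$ is a $Q$-equivalence, the 2-out-of-3 property reduces the lemma to showing that $P\bar\varphi\colon P\widehat{OF}\to P\{X_j\}$ is a $Q$-equivalence.

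The main obstacle is this last step: promoting the mere existence of the lift $\bar\varphi$ into a weak-equivalence statement. The key idea is to exploit the reflection property $OP\cong \id$ computed in Section~\ref{preliminaries}: applying $O$ to the cofibrant approximation of $P\bar\varphi$ and invoking Lemma~\ref{derived-counit} identifies $O(\mathbb LP\bar\varphi)$ with $\bar\varphi$ itself in the homotopy category of $(\proE)^{\op}$, while the factorisation above forces this adjunct to agree, up to strict weak equivalence, with the comparison $\widehat{OF}\we \{X_j\}$ determined by the $Q$-equivalence $\varphi$. Since both $\widehat{OF}$ and $\{X_j\}$ are fibrant in $(\proE)^{\op}$, once $\bar\varphi$ is identified as a strict weak equivalence, the right Quillen property of $P$ (Proposition~\ref{Quillen-map}) turns it into a fibrant-projective, hence $Q$-equivalence, and the proof closes by 2-out-of-3.
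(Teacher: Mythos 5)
Your setup is fine up to the factorisation $\varphi = P\bar\varphi\circ P\iota\circ u_F$: the lift $\bar\varphi$ exists for the reason you give, and by 2-out-of-3 everything does come down to showing that $P\bar\varphi\colon P\widehat{OF}\to P\{X_j\}$ is a $Q$-equivalence. But that last step is where the argument breaks, and I think the break is essentially a circularity. Lemma~\ref{derived-counit} tells you that $O$ applied to a cofibrant replacement of $P\bar\varphi$ recovers $\bar\varphi$ up to weak equivalence; this is a statement about the derived counit being an isomorphism and carries no information about whether $\bar\varphi$ itself is a weak equivalence. To get that, you appeal to ``the comparison $\widehat{OF}\we\{X_j\}$ determined by the $Q$-equivalence $\varphi$'' --- but no such weak equivalence has been constructed. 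What you would need is precisely that the derived adjoint of a $Q$-equivalence $F\to P\{X_j\}$ (cofibrant source, fibrant local target) is a weak equivalence in $(\proE)^{\op}$, and that is one half of the Quillen-equivalence condition that Lemma~\ref{derived-unit} is there to establish (via Lemma~\ref{reflection-Quillen-equiv}). Note also that $O$ being left Quillen does not let you apply it to $\varphi$ directly: $\varphi$ is only a $Q$-equivalence, not a fibrant-projective one, and its target need not be cofibrant, so $O\varphi=\hat\varphi$ need not be a weak equivalence for any formal reason.

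The paper's proof dualizes your strategy in a way that avoids this. Instead of applying Theorem~\ref{classification} to $QF$ and comparing $F$ with a specific $P\{X_j\}$, it tests the map $u_F$ against an \emph{arbitrary} $\cal F$-local functor $W$: by Proposition~\ref{Q-equiv=F-equiv} it suffices to show $\hom(\widetilde{u_F},W)$ is a weak equivalence, by Theorem~\ref{classification} one may take $W\simeq PX_\bullet$ with $X_\bullet$ cofibrant, and then the simplicial adjunction identifies $\hom(\widetilde{u_F},PX_\bullet)$ with $\hom(O\widetilde{P\widehat{OF}},X_\bullet)\to\hom(OF,X_\bullet)$, which is a weak equivalence by Lemma~\ref{derived-counit} together with $OF\we\widehat{OF}$. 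The point is that the adjunction is used at the level of mapping spaces into test objects, where Lemma~\ref{derived-counit} genuinely suffices, rather than to produce a map in $\proE$ whose invertibility would have to be argued separately. If you want to rescue your version, you would have to supply an independent proof that $\bar\varphi$ is an essentially levelwise weak equivalence, and I do not see how to do that without redoing the mapping-space argument.
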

\begin{proof}
By Proposition~\ref{Q-equiv=F-equiv}, it suffices to check whether $u_F$ is
an $\cal F$-equivalence, i.e. it suffices to verify that
$\hom(\widetilde{u_F}, W)$ is a weak equivalence for any $\cal F$-local functor $W$. By
Theorem~\ref{classification}, $W$ is weakly equivalent to a
filtered colimit of representable functors represented in
cofibrant objects of \cat E, and hence $W\simeq PX_\bullet$ for some
cofibrant $X_\bullet\in \proE$.

By adjunction, the map $\hom(\widetilde{u_F}, PX_\bullet)$ is
naturally isomorphic to the map $\hom(O\widetilde{P\widehat{O
F}}, X_\bullet)\to \hom({OF, X_\bullet})$.

By Lemma~\ref{derived-counit}, $O\widetilde{P\widehat{OF}}\simeq OP\widehat{OF}= \widehat{OF}$, showing
that the last map is a weak equivalence.
\end{proof}

\begin{proof}[Proof of Theorem~\ref{main-theorem}]
To establish the required Quillen equivalence we will use Lemma~\ref{reflection-Quillen-equiv}. We need to verify two conditions:
\begin{enumerate}
\item For all cofibrant $F\in \cal S^{\cat E}$, the derived unit map $u_F\colon F\to P\widehat{OF}$ is a weak equivalence;
\item For all fibrant $X_{\bullet}\in \proE^{\op}$ and for any cofibrant replacement $p\colon \widetilde{PX_\bullet} \trivfibr PX_\bullet$, the map $Op\colon O\widetilde{PX_\bullet} \to OPX_\bullet$ is a weak equivalence.
\end{enumerate}

The first condition was verified in Lemma~\ref{derived-unit} and the second condition in Lemma~\ref{derived-counit}.
\end{proof}

\appendix

\section{Quillen equivalence with a reflective subcategory}\label{Quillen-equivalence}
This appendix is devoted to the proof of a general statement about Quillen equivalences of model categories. That is, we analyse in detail the situation where the right adjoint is a fully faithful embedding of categories and formulate sufficient conditions  to claim that in this particular case the adjunction is a Quillen equivalence.

\begin{lemma}\label{reflection-Quillen-equiv}
Let $\xymatrix{L\colon \cat M
                \ar@/_/[r]  &
                                \cat N \!: R
                                \ar@/_/[l]}$
be a Quillen pair. Suppose that the right adjoint $R$ is a fully faithful embedding of categories. Suppose in addition that 
\begin{enumerate}
\item For any cofibrant $M\in \cat M$, let $j\colon LM \trivcofib \widehat{LM}$ be a fibrant replacement in $\cat N$, then the derived unit map $M\xrightarrow{u} RLM \xrightarrow{Rj} R\widehat{LM}$ is a weak equivalence;
\item  For any fibrant $N\in \cat N$, let $p\colon \widetilde{RN}\trivfibr RN$ be a cofibrant replacement in $\cat N$,  then $Lp\colon L\widetilde{RN} \to LRN$ is a weak equivalence.
\end{enumerate}
Then the Quillen pair $(L,R)$ is a Quillen equivalence.
\end{lemma}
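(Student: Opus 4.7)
My plan is to reduce the claim to the standard characterization of a Quillen equivalence: the pair $(L,R)$ is a Quillen equivalence if and only if the derived unit $M \to R\widehat{LM}$ is a weak equivalence for every cofibrant $M \in \cat M$ and the derived counit $L\widetilde{RN} \to N$ is a weak equivalence for every fibrant $N \in \cat N$ (see \cite{Hovey}). Hypothesis~(1) is literally the derived unit condition, so the entire task reduces to verifying the derived counit condition.

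The key categorical input is the classical fact that a right adjoint $R$ is fully faithful if and only if the counit $\varepsilon \co LR \to \Id_{\cat N}$ is a natural isomorphism. I will invoke this directly from the assumption that $R$ is a fully faithful embedding. Consequently, for any fibrant $N \in \cat N$ and any cofibrant replacement $p \co \widetilde{RN} \trivfibr RN$, the derived counit
\[
L\widetilde{RN} \xrightarrow{\;Lp\;} LRN \xrightarrow{\;\varepsilon_N\;} N
\]
is the composition of $Lp$ with an isomorphism. By hypothesis~(2), $Lp$ is a weak equivalence, hence so is the derived counit.

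Combining the two checks, both conditions characterizing a Quillen equivalence are satisfied, and the lemma follows. The only substantive step is the appeal to the categorical fact that a fully faithful right adjoint has invertible counit; everything else is bookkeeping against the definition. I do not anticipate any serious obstacle, since the hypotheses have been tailored to match the two halves of the standard criterion.
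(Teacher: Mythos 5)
Your proof is correct, and it reaches the conclusion by a mildly different route than the paper. You reduce everything to the standard criterion that a Quillen adjunction is a Quillen equivalence if and only if the derived unit is a weak equivalence on cofibrant objects and the derived counit is a weak equivalence on fibrant objects (Hovey, Prop.~1.3.13), observe that hypothesis~(1) is the first half verbatim, and obtain the second half from hypothesis~(2) together with the classical fact that a fully faithful right adjoint has invertible counit, so that the derived counit $L\widetilde{RN}\xrightarrow{Lp}LRN\xrightarrow{\varepsilon_N}N$ is a weak equivalence followed by an isomorphism. The paper instead verifies the definition of Quillen equivalence directly: for cofibrant $M$ and fibrant $N$ it shows that $f\co LM\to N$ is a weak equivalence if and only if its adjoint $g\co M\to RN$ is, by factoring $f$ through a fibrant replacement of $LM$ (respectively $g$ through a cofibrant replacement of $RN$) and chasing the `2-out-of-3' property; the same two ingredients --- the two hypotheses and the invertibility of the counit --- do all the work there as well. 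Your version buys brevity and a clean modular structure at the cost of citing the derived unit/counit criterion as a black box; the paper's version is self-contained and in effect re-proves that criterion in this special case. One small point to make explicit if you invoke the criterion: the hypotheses of the lemma are stated for arbitrary (not necessarily functorial) fibrant and cofibrant replacements, whereas the criterion is often stated with a fixed functorial replacement. Since whether the derived unit or counit is a weak equivalence is independent of the choice of replacement (any two replacements are connected by a weak equivalence between cofibrant, respectively fibrant, objects, which $L$, respectively $R$, preserves), there is no mismatch, but a sentence acknowledging this would make the argument airtight.
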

\begin{proof}
We need to show that for every cofibrant $M\in \cat M$ and for every fibrant $N\in \cat N$ a map $f\colon L(M)\to N$ is a weak equivalence in $\cat N$ if and only if the adjoint map $g\colon M\to RN$ is a weak equivalence in $\cat M$.

Suppose that $f\colon L(M)\to N$ is a weak equivalence. Applying a fibrant replacement on $L(M)$, we obtain a trivial cofibration $j\colon LM \trivcofib \widehat{LM}$ and a factorization of $f$ as $f=\hat f j$, where the lifting $\hat{f}$ exists since $N$ is fibrant. Moreover, $\hat{f}$ is a weak equivalence of fibrant objects by the `2-out-of-3' property. The adjoint map $g$ factors as a unit of the adjunction $u\colon M\to RLM$ composed with $Rf$: $g=R(f)u$, but $Rf = R(\hat{f}j) = R(\hat{f})Rj$, and hence $g = R(\hat{f}) (R(j)u)$. Now, $R(\hat{f})$ is a weak equivalence, since $R$ is a right Quillen functor and preserves weak equivalences of fibrant
objects. The composed map $R(j) u$ is a weak equivalence by the first condition of the lemma.

Conversely, suppose that $g\colon M\to RN$ is a weak equivalence. Consider a cofibrant replacement $p\colon \widetilde{RN}\trivfibr RN$. Then, there exists a lift $\tilde g\colon M\to \widetilde{RN}$ in the homotopy model structure. Note that $\tilde g$ is a weak equivalence of cofibrant objects by the `2-out-of-3' property, since $g=p\tilde g$. The adjoint map $f\colon LM \to N$ factors as $Lg$ followed by the counit $c\colon LRN\to N$, which is a natural isomorphism for all $N$, since $R$ is fully faithful. However, $Lg=LpL\tilde g$, where $Lp$ is a weak equivalence by the second condition of the lemma and $L\tilde g$ is a weak
equivalence, since $L$ is a left Quillen functor. Hence $f$ is a weak equivalence.
\end{proof}

\bibliographystyle{abbrv}
\bibliography{Xbib}

\begin{thebibliography}{10}

\bibitem{Arone-Ching-classification}
G.~Arone and M.~Ching.
\newblock A classification of {T}aylor towers for functors of spaces and
  spectra.
\newblock Preprint, arxiv:1209.5661, 2012.

\bibitem{Arone-Ching-crosseffects}
G.~Arone and M.~Ching.
\newblock Cross-effects and the classification of {T}aylor towers.
\newblock Preprint, arxiv:1404.1417, 2012.

\bibitem{Arone-Mahowald}
G.~Arone and M.~Mahowald.
\newblock The {G}oodwillie tower of the identity functor and the unstable
  periodic homotopy of spheres.
\newblock {\em Invent. Math.}, 135(3):743--788, 1999.

\bibitem{Barnes-Eldred}
D.~Barnes and R.~Eldred.
\newblock Capturing {G}oodwillie's derivative.
\newblock Preprint, arXiv:1406.0424, 2014.

\bibitem{Duality}
G.~Biedermann and B.~Chorny.
\newblock Duality and small functors.
\newblock {\em Algebraic and Geometric Topology}, 2015.
\newblock To appear.

\bibitem{BCR}
G.~Biedermann, B.~Chorny, and O.~R\"ondigs.
\newblock {Calculus of functors and model categories}.
\newblock {\em Adv. in Math.}, 214(1):92--115, 2007.

\bibitem{Biedermann-Roendigs}
G.~Biedermann and O.~R{\"o}ndigs.
\newblock Calculus of functors and model categories, {II}.
\newblock {\em Algebr. Geom. Topol.}, 14(5):2853--2913, 2014.

\bibitem{CaCho}
C.~Casacuberta and B.~Chorny.
\newblock The orthogonal subcategory problem in homotopy theory.
\newblock In {\em An alpine anthology of homotopy theory}, volume 399 of {\em
  Contemp. Math.}, pages 41--53. Amer. Math. Soc., Providence, RI, 2006.

\bibitem{Chorny-ClassHomFun}
B.~Chorny.
\newblock A classification of small homotopy functors from spectra to spectra.
\newblock Preprint, 2013.

\bibitem{pro-spaces}
B.~Chorny.
\newblock A generalization of {Q}uillen's small object argument.
\newblock {\em Journal of Pure and Applied Algebra}, 204:568--583, 2006.

\bibitem{Chorny-BrownRep}
B.~Chorny.
\newblock Brown representability for space-valued functors.
\newblock {\em Israel J. Math.}, 194(2):767--791, 2013.

\bibitem{Chorny-Dwyer}
B.~Chorny and W.~G. Dwyer.
\newblock Homotopy theory of small diagrams over large categories.
\newblock {\em Forum Mathematicum}, 2007.
\newblock To appear.

\bibitem{Chorny-Rosicky}
B.~Chorny and J.~Rosick{\'y}.
\newblock Class-combinatorial model categories.
\newblock {\em Homology Homotopy Appl.}, 14(1):263--280, 2012.

\bibitem{Chorny-Rosicky-class-accessible}
B.~Chorny and J.~Rosick{\'y}.
\newblock Class-locally presentable and class-accessible categories.
\newblock {\em J. Pure Appl. Algebra}, 216(10):2113--2125, 2012.

\bibitem{Day-Lack}
B.~Day and S.~Lack.
\newblock Small limits of functors.
\newblock {\em Journal of Pure and Applied Algebra}, 210:651--663, 2007.

\bibitem{Dugger-generation}
D.~Dugger.
\newblock {C}ombinatorial {M}odel {C}ategories have presentation.
\newblock {\em Adv. in Math.}, 164(1):177--201, December 2001.

\bibitem{Dundas-Goodwillie-McCarthy}
B.~I. Dundas, T.~G. Goodwillie, and R.~McCarthy.
\newblock {\em The local structure of algebraic {K}-theory}, volume~18 of {\em
  Algebra and Applications}.
\newblock Springer-Verlag London, Ltd., London, 2013.

\bibitem{Dundas-Roendigs-Ostvaer}
B.~I. Dundas, O.~R{\"o}ndigs, and P.~A. {\O}stv{\ae}r.
\newblock Enriched functors and stable homotopy theory.
\newblock {\em Doc. Math.}, 8:409--488 (electronic), 2003.

\bibitem{Dwyer_localizations}
W.~G. Dwyer.
\newblock Localizations.
\newblock In {\em Axiomatic, enriched and motivic homotopy theory}, volume 131
  of {\em NATO Sci. Ser. II Math. Phys. Chem.}, pages 3--28. Kluwer Acad.
  Publ., Dordrecht, 2004.

\bibitem{Goo:calc1}
T.~G. Goodwillie.
\newblock Calculus. {I}. {T}he first derivative of pseudoisotopy theory.
\newblock {\em $K$-Theory}, 4(1):1--27, 1990.

\bibitem{Goo:calc2}
T.~G. Goodwillie.
\newblock Calculus. {II}. {A}nalytic functors.
\newblock {\em $K$-Theory}, 5(4):295--332, 1991/92.

\bibitem{Goo:calc3}
T.~G. Goodwillie.
\newblock Calculus. {III}. {T}aylor series.
\newblock {\em Geom. Topol.}, 7:645--711 (electronic), 2003.

\bibitem{Hirschhorn}
P.~S. Hirschhorn.
\newblock {\em Model categories and their localizations}, volume~99 of {\em
  Mathematical Surveys and Monographs}.
\newblock American Mathematical Society, Providence, RI, 2003.

\bibitem{Hovey}
M.~Hovey.
\newblock {\em Model categories}.
\newblock Mathematical Surveys and Monographs 63. American Mathematical
  Society, Providence, RI, 1999.

\bibitem{Isaksen-strict}
D.~C. Isaksen.
\newblock Strict model structures for pro-categories.
\newblock In {\em Categorical decomposition techniques in algebraic topology
  (Isle of Skye, 2001)}, volume 215 of {\em Progr. Math.}, pages 179--198.
  Birkh\"auser, Basel, 2004.

\bibitem{fat}
M.~Makkai, J.~Rosick{\'y}, and L.~Vok{\v{r}}{\'{\i}}nek.
\newblock On a fat small object argument.
\newblock {\em Adv. Math.}, 254:49--68, 2014.

\bibitem{Pereira}
L.~A. Pereira.
\newblock A general context for goodwillie calculus.
\newblock Preprint, arXiv:1301.2832 [math.AT], 2013.

\end{thebibliography}

\end{document}